\newtheorem{theorem}{Theorem}[section]
\newcommand{\bv}{\mbox{\bf v}}
\newcommand{\bW}{\mbox{\bf W}}
\newtheorem{corollary}{Corollary}
\newtheorem{rmk}{Remark}
\newtheorem{definition}{Definition}
\newtheorem{cond}{Condition}
\newcommand{\non}{\nonumber \\}
\newcommand{\bbA}{{\bf A}}
\newcommand{\bbB}{{\bf B}}
\newcommand{\bbD}{{\bf D}}
\newcommand{\bbe}{{\bf e}}
\newcommand{\bbH}{{\bf H}}
\newcommand{\ep}{\ensuremath{\epsilon}} 
\newcommand{\bbI}{{\bf I}}
\newcommand{\bbM}{{\bf M}}
\newcommand{\bbr}{{\bf r}}
\newcommand{\bbs}{{\bf s}}
\newcommand{\bbV}{{\bf V}}
\newcommand{\bbv}{{\bf v}}
\newcommand{\bbW}{{\bf W}}
\newcommand{\bbX}{{\bf X}}
\newcommand{\bbx}{{\bf x}}
\newcommand{\bbb}{{\bf b}}
\newtheorem{example}{Example}
\newcommand{\E}{\mathbb{E}}
\newcommand{\diag}{\mathrm{diag}}
\newcommand{\rank}{\mathrm{rank}}
\newcommand{\var}{\mathrm{var}}
\newcommand{\bbpi}{{\boldsymbol \pi}}
\newcommand{\bbPi}{{\boldsymbol \Pi}}
\newcommand{\bbTheta}{{\boldsymbol \Theta}}
\newcolumntype{C}[1]{>{\centering\arraybackslash}p{#1}}
\newcommand{\bbzero}{{\bf 0}}
\newcommand{\bbone}{{\bf 1}}
\begin{document}

\begin{frontmatter}
\title{Universal Rank Inference via Residual Subsampling with Application to Large Networks}
\runtitle{Rank Inference}

\begin{aug}
\author[A]{\fnms{Xiao}~\snm{Han}\ead[label=e1]{xhan011@ustc.edu.cn}},
\author[A]{\fnms{Qing}~\snm{Yang}\thanksref{s1}\ead[label=e2]{yangq@ustc.edu.cn}}
\and
\author[B]{\fnms{Yingying}~\snm{Fan}\thanksref{s1}\ead[label=e3]{fanyingy@marshall.usc.edu}}
\address[A]{International Institute of Finance, School of Management, University of Science and Technology of China, Hefei, Anhui 230026, China \printead[presep={,\ }]{e1,e2}}

\address[B]{Data Sciences and Operations Department, University of Southern California, Los Angeles, CA 90089 \printead[presep={,\ }]{e3}}
\thankstext{s1}{Yingying Fan and Qing Yang serve as co-corresponding authors. }
\end{aug}

\begin{abstract}
Determining the precise rank is an important problem in many large-scale applications with matrix data exploiting
low-rank plus noise models. In this paper, we suggest a universal approach to rank inference via residual subsampling (RIRS) for testing and estimating rank in a wide family of models, including many popularly used network models such as the degree corrected mixed membership model as a special case. Our procedure constructs a test statistic via subsampling entries of the residual matrix after extracting the spiked components. The test statistic converges in distribution to the standard normal under the null hypothesis, and diverges to infinity with asymptotic probability one under the alternative hypothesis.  The effectiveness of RIRS procedure is justified theoretically, utilizing the asymptotic expansions of eigenvectors and eigenvalues for large random matrices recently developed in \cite{FF18} and \cite{FFH18}. The advantages of the newly suggested procedure are demonstrated through several simulation and real data examples.
\end{abstract}
\begin{keyword}[class=MSC]
\kwd[Primary ]{	62F03 }
\kwd{62F12}
\kwd[; secondary ]{60B20}
\kwd{62F35}
\end{keyword}
\begin{keyword}
\kwd{Rank inference}
\kwd{Robustness}
\kwd{Low-rank models}
\kwd{High dimensionality}
\kwd{Asymptotic expansions}
\kwd{Eigenvectors}
\kwd{Eigenvalues}
\kwd{Large random matrices}
\end{keyword}
\end{frontmatter}

\section{Introduction} \label{Sec1}

Matrix data have been popularly encountered in various big data applications.  For example,  many science and social applications involve individuals with complicated interaction systems. Such systems can often be modeled using a network with nodes representing the $n$ individuals and edges representing the connectivity among individuals.  The overall connectivity can thus be recorded in an $n\times n$ adjacency matrix whose entries are zeros and nonzeros, representing the corresponding pair of nodes unconnected or connected, respectively. Examples include the friendship network, the citation network, the predator-prey interaction network, and many others.

There has been a large literature on statistical methods and theory proposed for analyzing matrix data. In the network setting, the observed adjacency matrix is frequently modeled as the summation of a latent deterministic low rank mean matrix and  a random noise matrix, where the former stores all useful information in the data and is often the interest.
One popular assumption is that the rank $K$ of the latent mean matrix is known. However, in practice, such $K$ is generally unknown and needs to be estimated. This paper focuses on estimation and inference on the low rank $K$ in a general model setting including many popularly used network models as special cases.

In our model, the data matrix $\bbX$ can be roughly decomposed as a low rank mean matrix $\bbH$ with $K$ spiked eigenvalues and a noise matrix $\bbW$ whose components are mostly independent.  Here, $K$ is unknown and allowed to slowly diverge with $n$. To infer $K$ with quantified statistical uncertainty, we propose a universal approach for Rank Inference by Residual Subsampling (RIRS). Specifically, we consider the hypothesis test
\begin{equation}\label{eq:hypothesis}
H_0: K=K_0\ \text{ vs. } \ H_1: K>K_0
\end{equation}
with $K_0$ some pre-specified positive integer. The spiked mean matrix with rank $K_0$ can be estimated by eigen decomposition, subtracting which from the observed data matrix yields the residual matrix. Then by appropriately subsampling the entries of the residual matrix, we can construct a test statistic. We prove that under the null hypothesis,  the test statistic converges in distribution to the standard normal, and under the alternative hypothesis, some spiked structure remains in the residual matrix and the constructed test statistic behaves very differently. Thus,  the hypothesis test in  \eqref{eq:hypothesis} can be successfully conducted. Then by sequentially testing the hypothesis \eqref{eq:hypothesis} for $K_0 = 1,\cdots, K_{\max}$ with $K_{\max}$ some large enough positive integer, we can estimate $K$ as the first integer that makes our test fail to reject. We provide theoretical justifications on the effectiveness of our procedure. We show that the size of our test tends to the desired level $\alpha$ as sample size increases,  and establish conditions under which the power approaches one asymptotically. We also show that the sequential procedure correctly estimates the true rank with probability tending to $1-\alpha$ as network size increases.

A key to RIRS's success is the subsampling step.  Although the noise matrix $\bbW$ has mostly independent components, the residual matrix is only an estimate of $\bbW$ and has correlated components.  Intuitively speaking, if too many entries of the residual matrix are sampled, the accumulated estimation error and the correlation among sampled entries would be too large, rendering the asymptotic normality invalid. We provide both theoretical and empirical guidance on how many entries to subsample.
 In the special case where the diagonals of the data matrix $\bbX$ are nonzero independent  random variables (which corresponds to selfloops in network models), a special deterministic sampling scheme can be used  and the RIRS test takes a simpler form.

The structure of low rank mean matrix plus noise matrix is very general and includes many popularly used network models such as the Stochastic Block Model (SBM, \cite{SBM1983, WangWong1987, Abbe2017CommunityDA}), Degree Corrected SBM (DCSBM, \cite{Karrer2011}), Mixed Membership (MM) Model, and Degree Corrected Mixed Membership (DCMM) Model \citep{airoldi2008} as special cases. RIRS procedure and the theory established in this paper are applicable to all these network models and go beyond them.  In network model settings, RIRS can accommodate sparse networks and allows for extreme degree heterogeneity.
Substantial efforts have been made in the literature on estimating $K$ in some specific network models, where $K$ is referred to as the number of communities. For example,  \cite{Mc2013} proposed an MCMC algorithm based on the allocation sampler to cluster the nodes in SBM and simultaneously estimate $K$.  \cite{airoldi2008} developed a general variational inference algorithm to estimate the parameters in MM model with $K$ chosen according to some BIC criterion. \cite{2019J} considered testing \eqref{eq:hypothesis} with $K_0=1$ and  proposed a signed polygon statistic which can accommodate the degree heterogeneity in the DCMM model.
\cite{gao2017} proposed EZ statistics constructed by ``frequencies of three-node subgraphs'' to test \eqref{eq:hypothesis} with $K_0=1$ in the setting of DCSBM. \cite{ba2017} introduced a linear spectral statistic to test
$H_0:K=1$  vs. $H_1: K=2$ under the SBM.  \cite{jin2020estimating}  proposed a stepwise goodness-of-fit test for estimating $K$ under DCSBM. \cite{zhang2020adjusted} suggested an adjusted chi-square test to address the goodness-of-fit testing and model selection problem for DCSBM.
Compared to these works, we consider more general model and general positive integer $K_0$ that can be larger than 1.

 There is also a popular line of work using likelihood based methods to estimate $K$. For example, \cite{Daudin2008}, \cite{Latouche-etal2012}, \cite{saldana2017many}, and \cite{wang2017}, among others.  \cite{chatterjee2015matrix} introduced a universal singular value thresholding procedure for the matrix estimation which can be applied to estimate $K$.  \cite{ChenLei2018} proposed a network cross-validation method for estimating $K$ and proved the consistency of the estimator under  SBM. The cross-validation idea was also explored in \cite{li2020network} under the widely used  inhomogeneous Erd\"{o}s-Renyi model with low rank mean matrix via edge sampling.
 \cite{LL15} proposed to estimate $K$ using the spectral properties of two graph operators --  the
non-backtracking matrix and the Bethe Hessian matrix.
\cite{Zhao7321} proposed to sequentially extract one community at a time by optimizing some extraction criterion, based on which they proposed a hypothesis test for testing the number of communities empirically via permutation method.
\cite{BickelSarkar16} proposed a new test based on the asymptotic distribution of the largest eigenvalue of the appropriately rescaled adjacency matrix for testing whether a network is Erd\"{o}s Renyi or not, and suggested a recursive bipartition algorithm for estimating $K$. \cite{L16} generalized the test in \cite{BickelSarkar16} for testing whether a network is SBM with some specific $K_0$, and proposed a sequential testing idea to estimate the true number of communities.

Among the existing literature reviewed above, the works by \cite{BickelSarkar16} and \cite{L16} are most closely related to ours.
The main idea in both papers is that under the null hypothesis of SBM with $K_0$ communities, the model parameters can be estimated and the residual matrix can be calculated and appropriately rescaled. The rescaled residual matrix is close to a generalized Wigner matrix whose extreme eigenvalues (after recentering and rescaling) converge in distribution to the Tracy-Widom distribution. However, under the alternative hypothesis, the extreme eigenvalues behave very differently.  At a high level, this idea is related to ours in the sense that our proposal is also based on the residual matrix.

RIRS test differs from the literature in the way of using the residual matrix. Instead of investigating the spectral distribution of the residual matrix, we construct RIRS test by subsampling just a fraction of the entries in residual matrix. The subsampling idea ensures that the noise accumulation caused by estimating the mean matrix does not dominate the signal which guarantees the nice performance of our test. Compared to the existing literature, RIRS test behaves more similarly to a nonparameteric test in the sense that there is no assumption on specific structure of the low rank mean matrix. Yet, it is also simple and fast to implement.
Our asymptotic theory is also new to the literature. It is built on the recent developments on random matrix theory in \cite{FF18} and \cite{FFH18}, which establishes the asymptotic expansions of the eigenvalues and eigenvectors for a very general class of random matrices. This powerful result allows us to establish the sampling properties of RIRS test in equally general setting.

The remaining of the paper is organized as follows: Section \ref{Sec2} presents the model setting and motivation for RIRS. We introduce our new approach and establish its asymptotic properties in Section \ref{Sec: RRI}.
Simulations under various models are conducted to justify the performance of RIRS in Section \ref{Sec4}. We further apply RIRS to a real data example in Section \ref{Sec:real}.  Additional simulation examples and all proofs are relegated to the Appendix and the Supplementary material.

\subsection{Notations}
We introduce some notations that will be used throughout the paper.   We use $a\ll b$ to represent $a/b\rightarrow0$ and write $a\lesssim b$ if there exists a positive constant $c$ such that $0\le a\le c b$.
 For a matrix $\bbA$, we use $\lambda_j(\bbA)$ to denote the $j$-th largest eigenvalue, and $\|\bbA\|_F$,  $\|\bbA\|$, and $\|\bbA\|_{\infty}$ to denote the Frobenius norm, the spectral norm, and the maximum elementwise infinity norm, respectively. In addition, denote by $\bbA(k)$ the $k$th row of the matrix $\bbA$.  For a unit vector $\bbx = (x_1,\cdots, x_n)^T$, let $d_x=\|\bbx\|_{\infty}=\max|x_i|$ represent the vector infinity norm.

\section{Model setting and motivation} \label{Sec2}

\subsection{Model setting} \label{Sec2.1}

Consider an $n\times n$ symmetric random matrix $\widetilde\bbX$ admitting the following decomposition
\begin{equation}\label{0908.1}
\widetilde \bbX=\bbH+\bbW,
\end{equation}
where $\bbH = \mathbb E(\widetilde \bbX)$ is the mean matrix with unknown rank $K\ll n$  and $\bbW$ is the noise matrix with bounded and independent entries on and above the diagonals.  
In network applications, the observed matrix $\bbX$ is the adjacency matrix and can be either $\widetilde\bbX$ or $\widetilde\bbX - \diag(\widetilde\bbX)$, with the former  corresponding to network with  selfloops and the latter corresponding to network without selfloops, respectively. An important and interesting question is inferring the unknown rank $K$, which corresponds to the number of communities in network models. We address the problem  by testing the hypotheses \eqref{eq:hypothesis}
under the universal model \eqref{0908.1}.

We note that with some transformation, model \eqref{0908.1} can accommodate  nonsymmetric matrices. In fact, for any matrix $\widetilde \bbX$ that can be written as the summation of a rank $K$ mean matrix and a noise matrix of independent components, we can define a new matrix as
	$$\left(
\begin{array}{ccc}
\bbzero &\ \ \widetilde\bbX\\
\widetilde\bbX^T& \ \ \bbzero\\
\end{array}
\right).$$
It is seen that this new matrix has the same structure as in \eqref{0908.1} with rank $2K$, and our new method and theory both apply. For simplicity of presentation, hereafter we assume the symmetric matrix structure for $\widetilde\bbX$ and $\bbX$.

Write the eigen-decomposition of $\bbH$ as $\bbV\bbD\bbV^T$, where $\bbD=\diag(d_1,...,d_K)$ collects the nonzero eigenvalues of $\bbH$ in decreasing magnitude and $\bbV= (\bbv_1,\cdots, \bbv_K)$ is the matrix collecting the corresponding eigenvectors.  Denote by $\hat d_1, \cdots, \hat d_n$ the eigenvalues of $\bbX$ in decreasing magnitude and $\hat{\bbv}_1,\cdots, \hat{\bbv}_n$ the corresponding eigenvectors. We next discuss the motivation of RIRS.

\subsection{Motivation}\label{sec: motivation}

To gain insights, consider the simple case  when the observed data matrix $\bbX = \widetilde{\bbX}$ and follows model \eqref{0908.1}.  It is seen that $\mathbb E \bbW = \bf 0$.
Intuitively, as $n\rightarrow\infty$, the normalized statistic
$
\sum_{i=1}^nw_{ii}/\sqrt{\sum_{i=1}^n\mathbb{E}w_{ii}^2}
$
converges in distribution to standard normal. Meanwhile,  we expect
$
\sum\limits_{i=1}^n\mathbb{E}w_{ii}^2/\sum\limits_{i=1}^nw_{ii}^2
$
to converge to 1 in probability as $n\rightarrow \infty$.  These two results entail that
\begin{equation}\label{eq:norm-trueW}
\frac{\sum_{i=1}^nw_{ii}}{\sqrt{\sum_{i=1}^nw_{ii}^2}}
\end{equation}
is asymptotically normal as the matrix size $n\rightarrow \infty$.

In the ideal case where the eigenvalues $d_1, \cdots, d_K$ and eigenvectors $\bbv_1$, $\cdots$, $\bbv_K$ are known, a test of the form \eqref{eq:norm-trueW} can be constructed by replacing $w_{ii}$ with $\tilde w_{ii}$ where $\widetilde\bbW  = (\tilde w_{ij})= \bbX - \sum_{k=1}^{K_0}d_k\bbv_k\bbv_k^T$. Under the null hypothesis,  $\widetilde\bbW=\bbW$ and the corresponding test statistic (constructed in the same way as \eqref{eq:norm-trueW}) is asymptotically normal. However, under the alternative hypothesis, $\widetilde\bbW$ still contains some information from the $K-K_0$ smallest spiked eigenvalues and the corresponding eigenvectors and the test statistic is expected to exhibit different asymptotic behavior. Thus, the hypotheses in \eqref{eq:hypothesis} can be successfully tested by using this statistic.

In practice, the eigenvalues and eigenvectors of $\bbH$ are unavailable and need to be estimated. A natural estimate of $\widetilde \bbW$ takes the form
\begin{equation}\label{eq: resid-mat}
\widehat \bbW= (\hat w_{ij})=\bbX-\sum_{k=1}^{K_0}\hat d_k\hat\bbv_k\hat\bbv_k^T.
\end{equation}
 Under $H_0$, the residual matrix $\widehat\bbW$ is expected to be close to $\bbW$, which motivates us to consider test of the form
\begin{equation}\label{s1}
\widetilde T_n = \frac{\sum_{i=1}^n\hat w_{ii}}{\sqrt{\sum_{i=1}^n\hat w^2_{ii}}}.
\end{equation}
Intuitively, the asymptotic behavior of the above statistic should be close to the one in \eqref{eq:norm-trueW}.
Thus, by examining the asymptotic behavior of $\widetilde T_n$ we can test the desired hypotheses.  We will formalize this intuition in a later section.

The statistic in  \eqref{eq:norm-trueW}  is constructed using only the diagonals of $\bbW$.
In theory, the asymptotic normality remains true if we  aggregate any randomly sampled entries of the matrix $\bbW$ (instead of just the diagonals) and normalize properly,  as long as the sampling size is large enough, thanks to the independence of the entries on and above the diagonals of $\bbW$.  However, this does not translate into the asymptotic normality of the test based on $\widehat{\bbW}$
for at least two reasons: First, in applications absence of selfloops, the observed data matrix $\bbX$ takes the form $\widetilde\bbX - \diag(\widetilde\bbX)$ and thus $\widehat{\bbW}$ estimates $\bbW - \diag(\widetilde{\bbX})$ which has nonrandom diagonals. Consequently, test constructed using diagonals of $\widehat \bbW$ becomes invalid. Second,
the entries of $\widehat \bbW$ are all correlated and have errors caused by estimating the corresponding entries of $\bbW$.  Aggregating too many entries of $\widehat \bbW$ will cause too much noise accumulation. This together with the correlations among $\hat w_{ij}$ makes the asymptotic normality of the corresponding test statistic invalid. This heuristic argument is formalized in a later Section \ref{Sec3}.   Thus to overcome these difficulties, we need to carefully choose which and how many entries to aggregate.  These issues are formally addressed in the next section.

\section{Rank inference via residual subsampling} \label{Sec: RRI}
\subsection{A universal RIRS test}

A key ingredient of RIRS is subsampling the entries of $\widehat\bbW$. Specifically, define i.i.d. Bernoulli random variables $Y_{ij}$ with $\mathbb{P}(Y_{ij}=1)=\frac{1}{m}$ for $1\leq i<j\leq n$, where $m$ is some positive integer diverging with  $n$ at a rate that will be specified later. In addition, set $Y_{ji}=Y_{ij}$ for $i<j$. A universal RIRS test that works under the broad model \eqref{0908.1}  takes the following form
\begin{equation}\label{eq: test-general}
T_n=\frac{\sqrt{m}\sum_{i\neq j}\hat w_{ij}Y_{ij}}{\sqrt{2\sum_{i\neq j}\hat w_{ij}^2}}.
\end{equation}
The effect of $m$ is to control on average how many entries of the residual matrix to aggregate for calculating the test statistic. It will be made clear in a moment that $n^2/m$ needs to grow to infinity in order for the central limit theorem to take effect. However, the growth rate cannot be too fast because otherwise the noise accumulation and the correlation in $\hat w_{ij}$ would make the asymptotic normality invalid.

The following conditions will be used in our theoretical analysis.

\begin{cond}\label{cond1}
	$\bbW$ is a symmetric matrix with independent and bounded upper triangular entries (including the diagonals) and $\mathbb{E}w_{ij}=0$ for $i\neq j$.
\end{cond}

\begin{cond}\label{cond2}
For  $1\le i<j\le K$, if $|d_i|\neq |d_j|$, there exists a  positive constant $c_0$ such that $\frac{|d_i|}{|d_j|}\ge 1+c_0$.
\end{cond}

\begin{cond}\label{cond3}
	There exists a positive sequence $\theta_n$, which may converge to $0$ as $n\rightarrow \infty$, such that $\sigma_{ij}^2=\var(w_{ij})\le \theta_n$ and $\max\limits_{1\le i\le n}|h_{ii}|\lesssim\theta_n$.
	In addition, $\max\{n\theta_n,\log n\}\lesssim\alpha_n^2:=\max\limits_{i}\sum\limits_{j=1}^n\sigma_{ij}^2$,  $|d_K|\gtrsim \frac{\alpha_n^2}{(\log \log n)^{\ep'}}$ and {\color{black}$\frac{|d_K|}{\alpha_n}\gtrsim  (\log n)^{1+\ep}$} for some positive constants $\ep$, $\ep'$.

\end{cond}
\begin{cond}\label{cond4}
	$\|\bbV\|_{\infty}\lesssim\frac{1}{\sqrt n}$.
\end{cond}

\begin{cond}\label{cond5}
It holds that $\sum_{i\neq j} \sigma_{ij}^2\gg m$. In addition, for some small positive constant $\ep_1<\ep$ with $\ep$ the constant in Condition \ref{cond3},
$$\sum_{i\neq j}\sigma_{ij}^2\gtrsim  (\log n)^{\ep_1}\Big(\frac{n\sum_{k=1}^{K}(\mathbf{1}^T\bbv_k)^2}{m} + \alpha_n^2(\log n)^2+\frac{n}{m} + \frac{n^2\alpha_n^2(\log n)^6}{md_K^2}\Big).$$
\end{cond}

\begin{cond}\label{cond6}
The true rank satisfies that $K\leq O(\log\log n)$.
\end{cond}

With the above conditions, Theorem \ref{thma} below provides the asymptotic null distribution of RIRS test and Theorem \ref{thmb} establishes the asymptotic alternative distribution.

\begin{theorem}\label{thma}
	Assume Conditions \ref{cond1}-\ref{cond6}. 
	Under null hypothesis in \eqref{eq:hypothesis} we have
	\begin{equation}\label{0307.3}
	T_{n}\stackrel{d}{\rightarrow } N(0,1), \text{ as } n\rightarrow\infty.
	\end{equation}
\end{theorem}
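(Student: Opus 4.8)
The plan is to compare $T_n$ against an oracle statistic built from the true noise $\bbW$ and to show the discrepancy is asymptotically negligible. Under the null in \eqref{eq:hypothesis} we have $K=K_0$, so $\bbW = \bbX - \sum_{k=1}^{K_0} d_k\bbv_k\bbv_k^T$ and hence the residual error decomposes as $\hat w_{ij} = w_{ij} + E_{ij}$ with $E_{ij} = \sum_{k=1}^{K_0}\big(d_k v_{ki}v_{kj} - \hat d_k \hat v_{ki}\hat v_{kj}\big)$. Substituting into \eqref{eq: test-general}, the numerator splits as $\sqrt m\sum_{i\neq j}w_{ij}Y_{ij} + \sqrt m\sum_{i\neq j}E_{ij}Y_{ij}$. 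Concretely, the proof has three pieces: (i) a central limit theorem for the oracle numerator $\sqrt m\sum_{i\neq j}w_{ij}Y_{ij}$ after normalizing by $\sqrt{2\sum_{i\neq j}\sigma_{ij}^2}$; (ii) consistency of the denominator, $\sum_{i\neq j}\hat w_{ij}^2 = \sum_{i\neq j}\sigma_{ij}^2(1+o_P(1))$; and (iii) negligibility of the estimation-error numerator $\sqrt m\sum_{i\neq j}E_{ij}Y_{ij}$. A final Slutsky argument assembles these into \eqref{0307.3}.

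For (i), by Condition \ref{cond1} and the independence of the sampling variables, the summands $\{w_{ij}Y_{ij}\}_{i<j}$ are independent and mean zero (since $\E w_{ij}=0$ for $i\neq j$), with $\var(w_{ij}Y_{ij}) = \sigma_{ij}^2/m$ because $Y_{ij}^2=Y_{ij}$. Hence $\var\big(\sqrt m\sum_{i\neq j}w_{ij}Y_{ij}\big) = 2\sum_{i\neq j}\sigma_{ij}^2$, matching the target normalizer. I would verify the Lyapunov condition with fourth moments: using boundedness of $w_{ij}$ (so $\E w_{ij}^4\lesssim\sigma_{ij}^2$), the Lyapunov ratio is of order $m/\sum_{i\neq j}\sigma_{ij}^2$, which tends to zero by the first part of Condition \ref{cond5} ($\sum_{i\neq j}\sigma_{ij}^2\gg m$). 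For (ii), $\sum_{i\neq j}w_{ij}^2$ has mean $\sum_{i\neq j}\sigma_{ij}^2$ and, being a sum of independent bounded terms, fluctuates at the smaller order $\sqrt{\sum_{i\neq j}\sigma_{ij}^2}$, so it concentrates; the remaining pieces $2\sum_{i\neq j}w_{ij}E_{ij}$ and $\sum_{i\neq j}E_{ij}^2$ are then controlled by Cauchy--Schwarz once the bound on $\sum_{i\neq j}E_{ij}^2$ from (iii) is in place.

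Step (iii) is the crux. Conditioning on $\bbW$ (equivalently on the $E_{ij}$) and averaging over the sampling variables, the error numerator has conditional mean $m^{-1/2}\sum_{i\neq j}E_{ij}$ and conditional variance $2\sum_{i\neq j}E_{ij}^2$. It therefore suffices to establish, with high probability over $\bbW$, the two bounds
\begin{equation}\label{eq:errgoals}
\frac{1}{m}\Big(\sum_{i\neq j}E_{ij}\Big)^2 = o\Big(\sum_{i\neq j}\sigma_{ij}^2\Big) \quad\text{and}\quad \sum_{i\neq j}E_{ij}^2 = o\Big(\sum_{i\neq j}\sigma_{ij}^2\Big).
\end{equation}
Here the eigenvector/eigenvalue expansions of \cite{FF18} and the key Lemmas \ref{keylem}--\ref{keylem2} enter. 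Writing $\sum_{i\neq j}E_{ij} = \mathbf{1}^T(\widehat\bbW-\bbW)\mathbf{1} - \sum_i E_{ii}$ with $\mathbf{1}^T(\widehat\bbW-\bbW)\mathbf{1} = \sum_{k=1}^{K_0}\big(d_k(\mathbf{1}^T\bbv_k)^2 - \hat d_k(\mathbf{1}^T\hat\bbv_k)^2\big)$, I would expand each $\hat d_k(\mathbf{1}^T\hat\bbv_k)^2$ around $d_k(\mathbf{1}^T\bbv_k)^2$, reducing the fluctuations to bilinear forms such as $\mathbf{1}^T\bbW\bbv_k$ and $\bbv_k^T\bbW\bbv_k$ whose variances I control through Conditions \ref{cond3}--\ref{cond4}; this yields the first bound provided $\sum_{i\neq j}\sigma_{ij}^2$ dominates $n^{\ep}\,n\sum_{k}(\mathbf{1}^T\bbv_k)^2/m$. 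Similarly, $\sum_{i\neq j}E_{ij}^2\le\|\widehat\bbW-\bbW\|_F^2$, and the perturbation bounds on $\hat d_k\hat\bbv_k\hat\bbv_k^T - d_k\bbv_k\bbv_k^T$ from \cite{FF18} express this Frobenius discrepancy through the $\alpha_n^2$ and $n^2\alpha_n^2/(md_K^2)$ contributions appearing in Condition \ref{cond5}.

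The main obstacle is this last matching in step (iii): the residual $\widehat\bbW - \bbW$ mixes the estimation error of $K_0$ eigenvalues and eigenvectors, all correlated with $\bbW$ itself, so the bilinear and quadratic forms that emerge are not sums of independent terms and must be controlled through the delicate entrywise and bilinear-form expansions of \cite{FF18} rather than elementary moment bounds. Getting the three error terms of Condition \ref{cond5} to emerge with the right powers of $n$, $m$, $\alpha_n$ and $d_K$ — and in particular ensuring that the $n^{\ep}$ safety factor absorbs the high-probability slacks from those expansions — is the technically demanding part; the central limit theorem of (i) and the denominator concentration of (ii) are routine once these bounds are secured.
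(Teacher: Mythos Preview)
Your proposal is correct and follows essentially the same strategy as the paper: the paper also establishes the oracle CLT \eqref{0307.1} via Lyapunov, the denominator consistency \eqref{0307.4}, and controls the error numerator by splitting each piece of the detailed expansion \eqref{1106.1} of $\hat w_{ij}$ into its $Y$-mean ($Y_{ij}\to 1/m$) and $Y$-centered ($Y_{ij}-1/m$) parts, which is exactly your conditional-on-$\bbW$ decomposition carried out term by term. One small correction: the $n^2\alpha_n^2/(md_K^2)$ contribution in Condition~\ref{cond5} arises from the bias piece $m^{-1}\sum_{i\neq j}E_{ij}$ (specifically the $O_\prec(\alpha_n/(n|d_K|))$ remainder in \eqref{1106.1} summed over $O(n^2)$ entries), not from the Frobenius bound $\sum_{i\neq j}E_{ij}^2$, which the paper shows is $O_\prec(\alpha_n^2)$ in \eqref{1106.4}.
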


\begin{theorem}\label{thmb}
	Assume Conditions \ref{cond1}-\ref{cond6} and the alternative hypothesis in \eqref{eq:hypothesis}.
If $$\sum_{i\neq j}\big(\sum_{k=K_0+1}^{K}d_k\bbv_k(i)\bbv_k(j)\big)^2\ll \sum_{i\neq j}\sigma_{ij}^2,$$
then as  $n\rightarrow \infty$,

 		\begin{equation}\label{eq: alt-distr}
	T_n - \frac{\sqrt{m}\sum_{i\neq j}\sum_{k=K_0+1}^{K}d_k\bbv_k(i)\bbv_k(j)Y_{ij}}{\sqrt{2\sum_{i\neq j}\hat w^2_{ij}}}\stackrel{d}{\rightarrow } N(0,1).
 	\end{equation}
In addition, if
\begin{equation}\label{eq: m-cond2}
\left|\sum_{k=K_0+1}^{K}d_k\sum_{i\neq j}\bbv_k(i)\bbv_k(j)\right|\gg \sqrt{m}\left(\sqrt{\sum_{i\neq j}\mathbb \sigma_{ij}^2}+\sqrt{K-K_0}\sum_{k=K_0+1}^{K}|d_k|\right),
\end{equation}
we have
\begin{equation}\label{0307.5}
\mathbb P(	|T_n| > C) \rightarrow 1, \text{ as } n\rightarrow\infty
	\end{equation}
	for arbitrarily large positive constant $C$.
\end{theorem}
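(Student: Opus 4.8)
The plan is to analyze both claims through a single decomposition of the residual entries under the alternative. Since $\bbH=\sum_{k=1}^K d_k\bbv_k\bbv_k^T$ and $\bbX=\bbH+\bbW$, the off-diagonal entries of the residual matrix in \eqref{eq: resid-mat} split as
\[
\hat w_{ij}=w_{ij}+a_{ij}+\epsilon_{ij},\quad a_{ij}=\sum_{k=K_0+1}^{K}d_k\bbv_k(i)\bbv_k(j),\quad \epsilon_{ij}=\Big[\sum_{k=1}^{K_0}\big(d_k\bbv_k\bbv_k^T-\hat d_k\hat\bbv_k\hat\bbv_k^T\big)\Big]_{ij}.
\]
Here $a_{ij}$ is the leftover signal carried by the $K-K_0$ spikes that were never extracted, while $\epsilon_{ij}$ is the error from replacing the true leading eigenpairs by their sample versions. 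Under the eigenvalue separation in Conditions \ref{cond2}--\ref{cond3} the top $K_0$ empirical eigenpairs track the top $K_0$ true spikes, so the entrywise eigenvector/eigenvalue expansions of \cite{FF18} and \cite{FFH18} supply the bounds on $\epsilon_{ij}$ that drive everything; controlling the three aggregates $\sum_{i\neq j}\epsilon_{ij}Y_{ij}$, $\sum_{i\neq j}\epsilon_{ij}$, and $\sum_{i\neq j}\epsilon_{ij}^2$ at the rates permitted by Condition \ref{cond5} is what the rest of the proof reduces to.

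For \eqref{eq: alt-distr}, the recentering subtracts exactly the sampled signal $\sum_{i\neq j}a_{ij}Y_{ij}=\sum_{k=K_0+1}^{K}d_k\sum_{i\neq j}\bbv_k(i)\bbv_k(j)Y_{ij}$, so the recentered numerator collapses to $\sqrt m\sum_{i\neq j}(w_{ij}+\epsilon_{ij})Y_{ij}$. I would first bound $\sqrt m\sum_{i\neq j}\epsilon_{ij}Y_{ij}$ to be of smaller order than $\sqrt{\sum_{i\neq j}\sigma_{ij}^2}$, reducing the numerator to $\sqrt m\sum_{i\neq j}w_{ij}Y_{ij}$. The weak-signal hypothesis $\sum_{i\neq j}a_{ij}^2\ll\sum_{i\neq j}\sigma_{ij}^2$ then forces the denominator to concentrate at $2\sum_{i\neq j}\sigma_{ij}^2$: expanding $\sum_{i\neq j}\hat w_{ij}^2$, the squares $\sum a_{ij}^2$, $\sum\epsilon_{ij}^2$ and the cross sums $\sum w_{ij}a_{ij}$, $\sum w_{ij}\epsilon_{ij}$, $\sum a_{ij}\epsilon_{ij}$ are all of lower order than $\sum w_{ij}^2\approx\sum\sigma_{ij}^2$. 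At this point the problem is identical to the null central limit theorem already established for $T_n$ in Theorem \ref{thma}, and asymptotic normality follows.

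For the divergence \eqref{0307.5}, I condition on the data and treat $T_n$ as a function of the Bernoulli mask $\{Y_{ij}\}$ alone. Using $\mathbb{E} Y_{ij}=1/m$ and $\var(Y_{ij})=\frac1m(1-\frac1m)$ gives
\[
\mathbb{E}_Y(T_n)=\frac{m^{-1/2}\sum_{i\neq j}\hat w_{ij}}{\sqrt{2\sum_{i\neq j}\hat w_{ij}^2}},\qquad \var_Y(T_n)=1-\tfrac1m\to1,
\]
so the conditional standard deviation stays bounded and it suffices to show the conditional mean diverges. Writing $S=\sum_{k=K_0+1}^{K}d_k\sum_{i\neq j}\bbv_k(i)\bbv_k(j)$, we have $\sum_{i\neq j}\hat w_{ij}=\sum_{i\neq j}w_{ij}+S+\sum_{i\neq j}\epsilon_{ij}$, where $\sum_{i\neq j}w_{ij}=O_p(\sqrt{\sum\sigma_{ij}^2})$ and the error sum is negligible, so by the first half of \eqref{eq: m-cond2} the numerator is of order $S$. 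For the denominator, $\sum_{i\neq j}\hat w_{ij}^2\lesssim\sum_{i\neq j}\sigma_{ij}^2+\sum_{i\neq j}a_{ij}^2$ with $\sum_{i\neq j}a_{ij}^2\le\sum_{k>K_0}d_k^2\le(\sum_{k>K_0}|d_k|)^2$. The two halves of \eqref{eq: m-cond2} then yield $S^2/m\gg\sum_{i\neq j}\sigma_{ij}^2$ and $S^2/m\gg\sum_{i\neq j}a_{ij}^2$, hence $\mathbb{E}_Y(T_n)^2\gtrsim S^2/(m\sum_{i\neq j}\hat w_{ij}^2)\to\infty$, and a conditional Chebyshev inequality upgrades this to $\mathbb{P}(|T_n|>C)\to1$ for every fixed $C$.

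The main obstacle throughout is the uniform control of the estimation-error matrix $\sum_{k\le K_0}(d_k\bbv_k\bbv_k^T-\hat d_k\hat\bbv_k\hat\bbv_k^T)$, since every step hinges on $\epsilon_{ij}$ being negligible after summation, both against the random mask and inside the normalization. This is precisely where the entrywise expansions of \cite{FF18} are indispensable, and the delicate point is handling the interaction between these data-dependent errors and the independent subsampling, while keeping the leftover-signal aggregates $\sum_{i\neq j}a_{ij}Y_{ij}$ and $\sum_{i\neq j}a_{ij}^2$ intact, so that the recentering in \eqref{eq: alt-distr} and the signal dominance in \eqref{0307.5} come out clean.
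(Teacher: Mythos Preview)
Your part (i) is essentially the paper's argument: set $\tilde w_{ij}=\hat w_{ij}-a_{ij}$, observe that $\tilde w_{ij}$ obeys the same null-case expansion (Lemma \ref{lem1}), and reuse Theorem \ref{thma} for the numerator while the weak-signal condition and Cauchy--Schwarz push the denominator to $2\sum_{i\neq j}\sigma_{ij}^2$. That part is fine.

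Part (ii) is where you depart from the paper, and the departure costs you. Your conditional-on-data Chebyshev route is clean --- the identity $\var_Y(T_n)=1-1/m$ is exact --- but it forces you to control the \emph{unsampled} sum $\sum_{i\neq j}\hat w_{ij}=\sum_{i\neq j}w_{ij}+S+\sum_{i\neq j}\epsilon_{ij}$, and you assert that $\sum_{i\neq j}\epsilon_{ij}$ is ``negligible.'' This is precisely the accumulation that Section \ref{Sec3} warns against: the full sum $\sum_{i\neq j}\epsilon_{ij}$ carries the bias $R(K_0)$ and a modified variance that can swamp the naive $O_p(\sqrt{\sum\sigma_{ij}^2})$ scale (see Theorems \ref{thm1}--\ref{thm3} and the counterexample after \eqref{eq: cond-var}). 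It may well happen that under the strong hypothesis \eqref{eq: m-cond2} the signal $S$ still dominates $R(K_0)$ and these extra fluctuations, but checking this requires going through the full-sum analysis of Theorem \ref{thm1} term by term --- it is not a one-line remark, and you have not done it.

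The paper avoids the issue entirely. Instead of taking $\mathbb{E}_Y$ and landing on the full sum, it keeps the subsampling in place: by the null-case machinery, $\sqrt{m}\sum_{i\neq j}\tilde w_{ij}Y_{ij}/\sqrt{2\sum\tilde w_{ij}^2}\xrightarrow{d}N(0,1)$, which is $O_p(1)$ regardless of how large $\sum_{i\neq j}\epsilon_{ij}$ might be. It then bounds $\sum_{i\neq j}\hat w_{ij}^2\lesssim(\sqrt{\sum\tilde w_{ij}^2}+\sum_{k>K_0}|d_k|)^2$ and reduces \eqref{0307.5} to showing that the \emph{sampled} signal $\sqrt{m}\sum_{i\neq j}a_{ij}Y_{ij}$ blows up relative to this denominator; a direct mean--variance calculation on $\sum_{i\neq j}a_{ij}Y_{ij}$ in the $Y$-randomness alone (mean $S/m$, variance $\lesssim(\sum|d_k|)^2/m$) combined with \eqref{eq: m-cond2} finishes it. The point is that the paper never needs $\sum_{i\neq j}\epsilon_{ij}$ to be small, only $\sum_{i\neq j}\epsilon_{ij}Y_{ij}$ after subsampling --- exactly the distinction that motivated the test in the first place.
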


The result in \eqref{eq: alt-distr} guarantees  that if $\sum_{k=K_0+1}^{K}d_k\sum_{i\neq j}\bbv_k(i)\bbv_k(j)Y_{ij}$ is non-negligible compared with $\sqrt{2m^{-1}\sum_{i\neq j}\hat w_{ij}^2}$, then our test has non-vanishing power.  If in addition, the asymptotic mean is large enough such that \eqref{eq: m-cond2} is satisfied, then the asymptotic power can reach one.  The results on asymptotic size and power of RIRS test are formally summarized in the following Corollary.

\begin{corollary}\label{co1}
Under the conditions of Theorem \ref{thma}, we have
$$\lim_{n\rightarrow \infty}\mathbb{P}(|T_n|\geq \Phi^{-1}(1-\alpha/2)|H_0)=\alpha,$$
where $\Phi^{-1}(t)$ is the inverse of the standard normal distribution function, and $\alpha$ is the pre-specified significance level. Alternatively under the same conditions for ensuring \eqref{0307.5}, we have
$$\lim_{n\rightarrow \infty}\mathbb{P}(|T_n|\geq\Phi^{-1}(1-\alpha/2)|H_1)=1.$$
\end{corollary}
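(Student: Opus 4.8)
The plan is to read both limits off the two preceding theorems, handling the size claim under $H_0$ and the power claim under $H_1$ separately; throughout I write $c_\alpha = \Phi^{-1}(1-\alpha/2)$ for the fixed two-sided Gaussian quantile.

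For the size, I would start from Theorem \ref{thma}, which gives $T_n \stackrel{d}{\rightarrow} N(0,1)$ under $H_0$. Convergence in distribution delivers convergence of the distribution function at every continuity point of the limit, and since the standard normal CDF $\Phi$ is continuous on $\mathbb{R}$, both $c_\alpha$ and $-c_\alpha$ are continuity points. Hence $\mathbb{P}(T_n \geq c_\alpha) \to 1-\Phi(c_\alpha)$ and $\mathbb{P}(T_n \leq -c_\alpha) \to \Phi(-c_\alpha)$. Adding these and using $\Phi(c_\alpha) = 1-\alpha/2$ together with the symmetry $\Phi(-c_\alpha) = 1-\Phi(c_\alpha) = \alpha/2$ yields $\mathbb{P}(|T_n| \geq c_\alpha \mid H_0) \to (1-\Phi(c_\alpha)) + \Phi(-c_\alpha) = \alpha/2 + \alpha/2 = \alpha$, which is exactly the first claim. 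A small point worth recording is that because the limit has no atoms, $\mathbb{P}(|T_n| = c_\alpha) \to 0$, so the distinction between $\geq$ and $>$ at the boundary is immaterial.

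For the power, I would invoke \eqref{0307.5} of Theorem \ref{thmb}, which holds under condition \eqref{eq: m-cond2} and asserts $\mathbb{P}(|T_n| > C) \to 1$ for every fixed positive constant $C$. Since $\alpha \in (0,1)$ forces $1-\alpha/2 > 1/2$ and hence $c_\alpha = \Phi^{-1}(1-\alpha/2) > 0$, I simply take $C = c_\alpha$ to obtain $\mathbb{P}(|T_n| > c_\alpha) \to 1$. The trivial inclusion $\{|T_n| > c_\alpha\} \subseteq \{|T_n| \geq c_\alpha\}$ then upgrades this to $\mathbb{P}(|T_n| \geq c_\alpha \mid H_1) \to 1$, the desired power statement.

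Since both pieces follow directly from the conclusions of Theorems \ref{thma}--\ref{thmb}, there is no substantive obstacle here. The only care required is the routine one of passing from weak convergence to convergence of tail probabilities at the fixed quantile $c_\alpha$, which is legitimate precisely because the Gaussian limit is continuous; and on the power side, of checking that the fixed level $c_\alpha$ is an admissible choice of the constant $C$ appearing in \eqref{0307.5}.
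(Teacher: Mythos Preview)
Your proposal is correct and matches the paper's approach: the paper gives no explicit proof of Corollary~\ref{co1}, simply introducing it with ``By Theorems~\ref{thma} and~\ref{thmb}, we have the following Corollary,'' and your argument supplies precisely the routine details that make this immediate consequence rigorous. The only content is exactly what you identified---reading off tail probabilities from weak convergence to a continuous limit, and specializing the arbitrary constant $C$ in \eqref{0307.5} to $c_\alpha$---so there is nothing further to add.
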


\subsection{Remarks on the conditions}\label{rmk:cond}

Random matrix satisfying Condition \ref{cond1} is often termed as generalized Wigner matrix in the literature.  Condition \ref{cond2} allows for eigenvalue multiplicity and requires that there is enough gap between distinct eigenvalues. The constant $c_0$ can be replaced with some slowly vanishing term such as $(\log n)^{-1}$ and our main results will still hold with relevant conditions updated accordingly.

Condition  \ref{cond3} constraints that the nonzero eigenvalues of the low rank mean matrix should have enough spikiness. The two constraints $|d_K|\gtrsim \alpha_n^2(\log \log n)^{-\ep'}$ and $|d_K|\gtrsim \alpha_n(\log n)^{1+\ep}$ in Condition \ref{cond3} are imposed for controlling the noise accumulation in our test caused by estimating $w_{ij}$. The second constraint is a signal strength condition commonly imposed in random matrix theory literature; see \cite{BY08}, \cite{bao2018singular}  and \cite{P07},  among others. The logarithmic factor, $(\log n)^{1+\ep}$, measures the gap between the signal (spiked) eigenvalue and the noise eigenvalue, and is hard to be removed completely because otherwise the sample eigenvector would depend on the noise matrix $\bW$ in a complicated way that is not useful for statistical inference. The first constraint can be satisfied by many network models with low rank structure.   To see this, note that if $X_{ij}$, $j\ge i\ge1$ follows Bernoulli distribution and  $h_{ij}\sim \theta_n$ with $\max_{i,j}h_{ij} <1$, then $\alpha_n^2\sim n
\theta_n$. Since $h_{ij}$'s and $\sigma_{ij}^2$'s are the means and variances of Bernoulli random variables, respectively, we have $h_{ij}\sim \sigma_{ij}^2\sim \theta_n$ and $\|\bbH\|_F = \{\sum_{i,j}h_{ij}^2\}^{1/2} \sim n\theta_n$. Note also that $\|\bbH\|_F = \{\sum_{i=1}^Kd_i^2\}^{1/2}$. Assuming that $K$ is finite and $d_1\sim d_K$, these results together  with $\alpha_n^2\sim n
\theta_n$ derived earlier ensure that $|d_K|\gtrsim \alpha_n^2(\log \log n)^{-\ep'}$ is satisfied.

Condition \ref{cond5} characterizes what kind of $m$ can make RIRS succeed. More detailed discussion on the choice of $m$ will be given in  Section \ref{sec:choice-m}. Condition \ref{cond6} allows the rank $K$ to grow with network size $n$.

Condition \ref{cond4} is a technical condition needed for proving key Lemmas 2-3.  We remark that it can hold under extreme degree heterogeneity in network models.  The following example is used to illustrate this point.

\begin{example}
Consider DCSBM with $K=2$ where mean matrix takes the form
\begin{equation}\label{0105.1}
\bbH=\bbTheta\bbPi\bbB\bbPi^T\bbTheta.
\end{equation}
Here, $\bbB$ is a $2\times 2$ nonsingular matrix with diagonals 1 and off diagonals  taking a constant value in $[0,1)$, $\bbTheta$ is a diagonal matrix with the first $n/2$ diagonal entries taking the same value $\vartheta_1>0$ and the remaining diagonal entries taking the same value $\vartheta_2>0$,  and $\bbPi \in \mathbb{R}^{n\times 2}$ has the first $n/2$ rows equal to $(1,0)$ and the remaining ones equal to $(0,1)$.  Here, for simplicity we assume $n$ is an even number.  It is seen that the first $n/2$ nodes belong to community 1 and share the common degree parameter $\vartheta_1$,  and  the remaining belong to community 2 and share the common  degree parameter $\vartheta_2$.  Since the population eigenvector $\bbv_k$, $k=1,2$ satisfies
$$
\bbTheta\bbPi\bbB\bbPi^T\bbTheta\bbv_k = \lambda_k \bbv_k,
$$
we see that $\bbv_k$ takes the form $(a_1\mathbf{1}_{n/2}^T, a_2\mathbf{1}_{n/2}^T)^T$ with $\mathbf{1}_{n/2}\in\mathbb{R}^{n/2}$ a vectors of 1's and $a_1$ and $a_2$ two constants.  Since $\|\bbv_k\|_2=1$, it follows that $\max\{|a_1|, |a_2|\}\lesssim \frac{1}{\sqrt{n}}$ and thus Condition \ref{cond4} holds regardless of the values of $\vartheta_1$ and $\vartheta_2$.
 \end{example}

\subsection{Choice of $m$}\label{sec:choice-m}
It is seen from the previous two theorems that the tuning parameter $m$ plays a crucial role for RIRS to achieve the desired size with power tending to one.  Condition \ref{cond5} provides general conditions on the choice of $m$ for ensuring the null and alternative distributions in \eqref{0307.3} and \eqref{eq: alt-distr}.  For \eqref{0307.5} to hold, we also need the additional assumption \eqref{eq: m-cond2}. In some special cases, these conditions boil down to simpler forms which can provide us more specific guideline on the choice of $m$.

 As an example, we consider the special case
 \begin{align}\label{eq: cond-choose-m}
& (\log n)^{\delta} \min_{i\neq j}\sigma_{ij}^2\gtrsim   \max_{i\neq j}\sigma_{ij}^2,\non
  \text{ for } K_0<K,  \  \sum_{i\neq j}\sigma_{ij}^2 \lesssim & \left|\sum_{k=K_0+1}^{K}d_k\sum_{i\neq j}\bbv_k(i)\bbv_k(j)\right|, \text{ and }
  \ |d_1|\lesssim n\theta_n,
 \end{align}
where $\delta<\ep$ is some small positive constant with $\ep$ the same as in Condition \ref{cond3}.
The first condition above is for guaranteeing  both the asymptotic size and power results, while the remaining two conditions will be only used for verifying \eqref{eq: m-cond2} which is only needed in establishing the power results.  In network models,  the $(\log n)^{\delta}$ factor  in the first condition of \eqref{eq: cond-choose-m}  is related to degree heterogeneity. For very extreme degree heterogeneity, we may have $\max_{i\neq j}\sigma_{ij}^2/\min_{i\neq j}\sigma_{ij}^2$ diverge at some polynomial rate of sample size. We remark that similar results can be derived using identical proof idea for Theorem \ref{thm: m-choice} with some technical conditions appropriately modified.

We next discuss some more specific network models which can give us more insights on the three conditions in \eqref{eq: cond-choose-m}.
Consider the same DCSBM in \eqref{0105.1} except that $\bbTheta = \text{diag}\{\vartheta_1,\cdots, \vartheta_n\}$ with $\vartheta_j>0$, $j=1,\cdots, n$ the degree parameters.
Assume $\max_{i,j}\{\vartheta_j\vartheta_jb_{\mathcal{C}_i\mathcal{C}_j}\}$ is bounded away from 1 by some constant, where $\mathcal{C}_i \in \{1, 2\}$ is the membership for node $i$,  and $b_{kl}\in (0,1]$ is the $(k,l)$ entry of matrix $\bbB$ taking constant values.  Then we have $\sigma_{ij}^2\sim h_{ij} = \vartheta_i\vartheta_jb_{\mathcal{C}_i\mathcal{C}_j}$ because the entries of $\bbX$ have Bernoulli distributions.     It is seen that the ratio $\max_{i\neq j} \sigma^2_{ij}/\min_{i\neq j}\sigma_{ij}^2 \sim \max_{i\neq j}(\vartheta_i\vartheta_j)/ \min_{i\neq j}(\vartheta_i\vartheta_j)$. Thus, the first condition in \eqref{eq: cond-choose-m} is satisfied if $\max_j\vartheta_j/\min_j\vartheta_j \lesssim \sqrt{(\log n)^{\delta}}$.  {\color{black} It is also straightforward to see that $\theta_n=\max_{i,j}\{\vartheta_i\vartheta_jb_{\mathcal{C}_i\mathcal{C}_j}\}$ and $|d_1|\sim \|\bbH\|_F\lesssim n\max_{i,j}\{\vartheta_i\vartheta_jb_{\mathcal{C}_i\mathcal{C}_j}\}$.  Thus, the last condition $|d_1|\lesssim n\theta_n$ in \eqref{eq: cond-choose-m} holds.  The second condition reduces to $\sum_{i\neq j}\sigma_{ij}^2 \lesssim  \left|d_2\sum_{i\neq j}\bbv_2(i)\bbv_2(j)\right|$ in this model setting.  Since $\sum_{i\neq j}\sigma_{ij}^2 \leq n^2\max_{i\neq j}\{\vartheta_i\vartheta_jb_{\mathcal{C}_i\mathcal{C}_j}\}$, the second condition is satisfied if $n^2 \max_{i\neq j}\vartheta_i\vartheta_jb_{\mathcal{C}_i\mathcal{C}_j} \lesssim  \left|d_2\sum_{i\neq j}\bbv_2(i)\bbv_2(j)\right|$.
}

The next theorem specifies what kind of $m$ satisfies the two inequalities in Condition \ref{cond5} and \eqref{eq: m-cond2}.

\begin{theorem}\label{thm: m-choice}
	Set $\theta_n = \max_{i\neq j}\sigma_{ij}^2$.  Assume \eqref{eq: cond-choose-m} and Condition \ref{cond3},  and let $\ep_1\in (\delta,\ep)$ be some small constant. Then $m$ satisfying the following condition
	
{\color{black}
\begin{equation}\label{eq:m-cond1}
\theta_n^{-1}(\log n)^{\delta+\ep_1}K+n^{-1}\theta_n^{-2}(\log n)^{6+2\delta+\ep_1}(\log\log n)^{2\ep'}\ll m\ll (n/K)^2\theta_n (\log n)^{-2\delta}
	\end{equation}
}
makes Condition \ref{cond5} and inequality \eqref{eq: m-cond2} hold. Consequently, \eqref{0307.3} and \eqref{0307.5} hold under Conditions  \ref{cond1}--\ref{cond4} and \ref{cond6}. Moreover, a sufficient condition for \eqref{eq:m-cond1} is {\color{black}$n(\log n)^{-2\ep+\ep_1+2}(\log\log n)^{-2\ep'}K\ll m \ll nK^{-2}(\log n)^{2\ep-\delta+2}(\log\log n)^{2\ep'}$} under Conditions \ref{cond1}--\ref{cond4}.
\end{theorem}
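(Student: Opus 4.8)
The plan is to reduce every inequality in Condition \ref{cond5} and in \eqref{eq: m-cond2} to an explicit order-of-magnitude constraint on $m$, by first pinning down the sizes of all quantities appearing there from the comparability assumption \eqref{eq: cond-choose-m} together with Conditions \ref{cond3}--\ref{cond4}. Since $\min_{i\neq j}\sigma_{ij}^2\sim\max_{i\neq j}\sigma_{ij}^2=\theta_n$, summing over the $n(n-1)$ off-diagonal pairs gives $\sum_{i\neq j}\sigma_{ij}^2\sim n^2\theta_n$, and summing over a single row gives $\alpha_n^2=\max_i\sum_j\sigma_{ij}^2\sim n\theta_n$. Combining $|d_K|\gtrsim\alpha_n^2\sim n\theta_n$ from Condition \ref{cond3} with $|d_1|\lesssim n\theta_n$ from \eqref{eq: cond-choose-m} and the ordering $|d_1|\ge\cdots\ge|d_K|$ squeezes all spikes onto one scale, $|d_k|\sim n\theta_n$ for $k=1,\dots,K$. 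Finally, Cauchy--Schwarz (or Condition \ref{cond4}) gives $(\mathbf{1}^T\bbv_k)^2\le n$, so $\sum_{k=1}^{K_0}(\mathbf{1}^T\bbv_k)^2\lesssim n$ since $K_0$ is finite.

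With these scales in hand I would substitute into the two parts of Condition \ref{cond5}. The first inequality $\sum_{i\neq j}\sigma_{ij}^2\gg m$ becomes $m\ll n^2\theta_n$, implied by the upper bound in \eqref{eq:m-cond1}. For the second, the three terms on its right-hand side evaluate to $n\sum_{k=1}^{K_0}(\mathbf{1}^T\bbv_k)^2/m\lesssim n^2/m$, then $\alpha_n^2\sim n\theta_n$, and $n^2\alpha_n^2/(md_K^2)\sim n/(m\theta_n)$; requiring $n^2\theta_n\gtrsim n^\ep$ times each yields $m\gtrsim n^\ep/\theta_n$, the automatic inequality $n^{1-\ep}\gtrsim1$, and $m\gtrsim n^{\ep-1}\theta_n^{-2}$. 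The two nontrivial constraints are exactly the two summands in the lower bound of \eqref{eq:m-cond1}, the extra $\log n$ factors there merely upgrading "$\gtrsim$" to the strict "$\gg$". For \eqref{eq: m-cond2}, the third part of \eqref{eq: cond-choose-m} gives $\big|\sum_{k>K_0}d_k\sum_{i\neq j}\bbv_k(i)\bbv_k(j)\big|\gtrsim\sum_{i\neq j}\sigma_{ij}^2\sim n^2\theta_n$, while its right-hand side is $\sqrt m\big(\sqrt{n^2\theta_n}+\sum_{k>K_0}|d_k|\big)\lesssim\sqrt m\,n\sqrt{\theta_n}$, using $\theta_n\lesssim1$ so that $n\sqrt{\theta_n}$ dominates $n\theta_n$; hence \eqref{eq: m-cond2} also reduces to $m\ll n^2\theta_n$, again covered by the upper bound. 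This establishes the first assertion, and the ``consequently'' clause then follows by invoking Theorems \ref{thma} and \ref{thmb}: once \eqref{eq:m-cond1} forces Condition \ref{cond5} and \eqref{eq: m-cond2} to hold, Condition \ref{cond5} is no longer an independent hypothesis, leaving only Conditions \ref{cond1}--\ref{cond4}, and Theorem \ref{thma} yields \eqref{0307.3} while the \eqref{eq: m-cond2} branch of Theorem \ref{thmb} yields \eqref{0307.5}.

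For the simplified sufficient condition I would exploit the admissible range of $\theta_n$. Condition \ref{cond3} forces $\alpha_n^2\sim n\theta_n\to\infty$, hence $\theta_n\gg n^{-1}$, and $|d_K|/\alpha_n\sim\sqrt{n\theta_n}\gtrsim n^{\ep_0}$ for the constant $\ep_0$ in that condition gives $\theta_n\gtrsim n^{2\ep_0-1}$, while boundedness of the entries gives $\theta_n\lesssim1$. Inserting the worst case $\theta_n\sim n^{2\ep_0-1}$ into the two ends of \eqref{eq:m-cond1} shows the lower bound is largest and the upper bound smallest precisely there; I would then verify that the fixed, $\theta_n$-free band $n^{1-\ep}\ll m\ll n^{1+2\ep}(\log n)^{-1}$ sits inside \eqref{eq:m-cond1} across the whole admissible range, provided $\ep$ is taken small enough relative to $\ep_0$.

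The main obstacle I anticipate is exactly this final reconciliation of exponents. Because both ends of \eqref{eq:m-cond1} depend on the unknown $\theta_n$ and carry different powers of $\log n$, one must certify that the clean polynomial band is contained in the intersection of the windows over all admissible $\theta_n$; this forces the $\ep$ in the simplified condition to be chosen strictly smaller than $\ep_0$, so that polynomial gaps of the form $n^{2(\ep_0-\ep)}$ absorb the stray logarithmic factors. Everything preceding this is essentially bookkeeping: the comparability assumption \eqref{eq: cond-choose-m} collapses all variances and all spikes onto the single scales $\theta_n$ and $n\theta_n$, after which each inequality in Condition \ref{cond5} and \eqref{eq: m-cond2} is a one-line power count in $n$, $m$, and $\theta_n$.
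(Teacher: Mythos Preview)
Your proposal is correct and follows essentially the same approach as the paper: pin down the scales $\sum_{i\neq j}\sigma_{ij}^2\sim n^2\theta_n$, $\alpha_n^2\sim n\theta_n$, $|d_k|\sim n\theta_n$ from \eqref{eq: cond-choose-m} and Condition~\ref{cond3}, then reduce each inequality in Condition~\ref{cond5} and \eqref{eq: m-cond2} to an explicit constraint on $m$. Your version is in fact slightly cleaner than the paper's, which inexplicably weakens the comparability hypothesis to $\min_{i\neq j}\sigma_{ij}^2\gtrsim\theta_n(\log n)^{-1}$ and thereby drags $\log n$ factors through every estimate; you also correctly flag---where the paper does not---that the $\ep$ in the simplified band must be taken strictly smaller than the $\ep$ of Condition~\ref{cond3} to swallow those logarithms.
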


It is seen that Theorem \ref{thm: m-choice} allows for a wide range of values for $m$. In theory, any $m$ satisfying \eqref{eq:m-cond1} guarantees the asymptotic size and power of our test. In implementation, we found smaller $m$ in this range yields better empirical size.
It is also seen from \eqref{eq:m-cond1} that RIRS works with sparse networks. In fact, the only sparsity condition imposed by \eqref{eq:m-cond1}  is that {\color{black}$\theta_n \gg n^{-1}K^{3/2}(\log n)^{2+\delta+\ep_1/2}$}.
Our sparsity condition is  weaker than many existing ones in related work in the literature.  In particular, both \cite{BickelSarkar16} and \cite{L16} considered dense SBM with $\theta_n$ bounded below by some constant.

We remark that sparse models have been considered in the network literature, though mostly in estimation instead of inference problems. For example, \cite{wang2017} proposed a model selection criterion for estimating $K$ under the sparse setting of SBM with $n\theta_n/\log n \rightarrow \infty$. \cite{LL15} established the consistency of  their method for estimating $K$ under the setting $n\theta_n = O(1)$. We consider the statistical inference problem of hypothesis testing, which involves more delicate analyses for establishing the asymptotic distributions of the test statistic.

\subsection{A special case: networks with selfloops}

We formalize the heuristic arguments in Section \ref{sec: motivation} about the ratio statistic $\widetilde T_n$ in \eqref{s1} when the network admits selfloops. In such a case, the general test \eqref{eq: test-general} still works. However, the simpler one $\widetilde T_n$ can enjoy similar asymptotic properties without the trouble of choosing $m$.

\begin{theorem}\label{thm2}
	Suppose that Conditions \ref{cond1}-\ref{cond4} and \ref{cond6} hold, the network contains selfloops and {\color{black}$\sqrt{\sum_{i=1}^n\sigma_{ii}^2}\gg (\log n)^{2+\ep_2}$ for some positive constant $\ep_2$}.
	
	(i) Under null hypothesis we have
	\begin{equation}\label{ti1}
\widetilde T_n \stackrel{d}{\rightarrow } N(0,1), \text{ as } n\rightarrow \infty.
	\end{equation}

(ii) Under alternative hypothesis, 	if further $\sum_{i=1}^n(\sum_{k=K_0+1}^{K}d_kv^2_k(i))^2\ll \sum_{i=1}^n\sigma_{ii}^2$, we have
	\begin{equation}\label{ti2}
\widetilde T_n  -	\frac{\sum_{k=K_0+1}^{K}d_k}{\sqrt{\sum_{i=1}^n\hat w^2_{ii}}}\stackrel{d}{\rightarrow } N(0,1), \text{ as } n\rightarrow\infty.
	\end{equation}
In addition, if $|\sum_{k=K_0+1}^{K}d_k|^2\gg \sum_{i=1}^n\sigma^2_{ii}+\sum_{i=1}^n(\sum_{k=K_0+1}^{K}d_kv^2_k(i))^2$, then
	\begin{equation}\label{0614.1}
\mathbb P(	|\widetilde{T}_n|>C) \rightarrow 1,
	\end{equation}
for arbitrarily large positive constant $C$.
\end{theorem}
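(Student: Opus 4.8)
The plan is to decompose each diagonal residual and then analyze the numerator and denominator of $\widetilde T_n$ separately. Writing $x_{ii}=h_{ii}+w_{ii}$ with $h_{ii}=\sum_{k=1}^{K}d_kv_k^2(i)$, definition \eqref{eq: resid-mat} yields the exact identity
$$\hat w_{ii}=w_{ii}+\sum_{k=K_0+1}^{K}d_kv_k^2(i)+\delta_i,\qquad \delta_i:=\sum_{k=1}^{K_0}\big(d_kv_k^2(i)-\hat d_k\hat v_k^2(i)\big),$$
where the middle sum is empty under $H_0$. Because $\sum_i\hat v_k^2(i)=\sum_iv_k^2(i)=1$, summing over $i$ collapses the signal to $\sum_{k=K_0+1}^{K}d_k$ and the numerator's estimation error to the finite telescoping sum $\sum_i\delta_i=\sum_{k=1}^{K_0}(d_k-\hat d_k)$, so that the numerator equals $\sum_iw_{ii}+\sum_{k=K_0+1}^{K}d_k+\sum_{k=1}^{K_0}(d_k-\hat d_k)$.

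First I would handle the leading noise. Since $\bbH=\mathbb E\widetilde\bbX$, the diagonal entries $w_{ii}$ are independent, mean-zero and bounded, so Lyapunov's condition reduces to $(\sum_i\sigma_{ii}^2)^{-1/2}\to0$, which holds under $\sqrt{\sum_i\sigma_{ii}^2}\gg n^{\ep}$; this gives $\sum_iw_{ii}/\sqrt{\sum_i\sigma_{ii}^2}\stackrel{d}{\rightarrow}N(0,1)$ and, by the same boundedness, $\sum_iw_{ii}^2=(1+o_P(1))\sum_i\sigma_{ii}^2$. Next I would show the estimation error is negligible using the spiked eigenpair expansions of Lemmas \ref{keylem}--\ref{keylem2} (built on \cite{FF18,FFH18}). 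For the numerator only the aggregate bound $|\sum_{k=1}^{K_0}(d_k-\hat d_k)|\ll\sqrt{\sum_i\sigma_{ii}^2}$ is needed, which follows from the expansion of $\hat d_k-d_k$ whose leading term is the quadratic form $\bbv_k^T\bbW\bbv_k$: under Conditions \ref{cond3}--\ref{cond4} this form has standard deviation of order $\sqrt{\theta_n}\lesssim1\ll n^{\ep}$. For the denominator I would instead establish the entrywise bound $\sum_i\delta_i^2\ll\sum_i\sigma_{ii}^2$; the cross term is then controlled by Cauchy--Schwarz through $|\sum_iw_{ii}\delta_i|\le(\sum_iw_{ii}^2)^{1/2}(\sum_i\delta_i^2)^{1/2}$.

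Assembling these pieces, under $H_0$ the numerator is $\sum_iw_{ii}(1+o_P(1))$ and $\sum_i\hat w_{ii}^2=(1+o_P(1))\sum_i\sigma_{ii}^2$, so Slutsky's theorem gives \eqref{ti1}. Under the alternative with $\sum_i(\sum_{k=K_0+1}^{K}d_kv_k^2(i))^2\ll\sum_i\sigma_{ii}^2$ the extra signal is also negligible in the denominator, and the identical argument applied to $\sum_i\hat w_{ii}-\sum_{k=K_0+1}^{K}d_k$ gives \eqref{ti2}. For the divergence \eqref{0614.1}, the numerator equals $\sum_{k=K_0+1}^{K}d_k+O_P(\sqrt{\sum_i\sigma_{ii}^2})$ while $\sqrt{\sum_i\hat w_{ii}^2}\lesssim\sqrt{\sum_i\sigma_{ii}^2+\sum_i(\sum_{k=K_0+1}^{K}d_kv_k^2(i))^2}$; the hypothesis $|\sum_{k=K_0+1}^{K}d_k|^2\gg\sum_i\sigma_{ii}^2+\sum_i(\sum_{k=K_0+1}^{K}d_kv_k^2(i))^2$ forces $|\widetilde T_n|\to\infty$ in probability, hence $\mathbb P(|\widetilde T_n|>C)\to1$ for every fixed $C$.

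The main obstacle is the entrywise control in $\sum_i\delta_i^2$: the telescoping that reduces the numerator error to a finite sum of eigenvalue perturbations is unavailable for the sum of squares, so I must show that the per-coordinate errors $\hat d_k\hat v_k^2(i)-d_kv_k^2(i)$ do not accumulate. This demands a uniform, entrywise eigenvector expansion rather than a mere $\ell_2$ perturbation bound, and the delicate point is forcing the accumulated error below the possibly slowly growing normalizer $\sum_i\sigma_{ii}^2$, which is secured only through the joint effect of the delocalization bound $\|\bbV\|_\infty\lesssim n^{-1/2}$ and the signal-strength condition $|d_K|\gtrsim\alpha_n^2$.
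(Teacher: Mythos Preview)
Your proposal is correct and follows the paper's overall architecture---reduce the numerator to $\sum_iw_{ii}$ plus negligible error, show the denominator is consistent for $\sum_i\sigma_{ii}^2$, and combine via Slutsky---but your treatment of the numerator is genuinely cleaner than the paper's. The paper first derives the full entrywise expansion $\hat w_{ii}=w_{ii}+O_{\prec}(n^{-1})-2\sum_{k=1}^{K_0}d_kt_k^{-1}(\bbe_i^T\bbW\bbv_k)\bbv_k(i)$ from Lemmas~\ref{keylem}--\ref{keylem2} and then sums over $i$, arriving at a leading correction $\bbv_k^T\bbW\bbv_k$; you instead sum \emph{first}, exploiting $\sum_i\hat v_k^2(i)=\sum_iv_k^2(i)=1$ to collapse $\sum_i\delta_i$ to the finite sum $\sum_{k=1}^{K_0}(d_k-\hat d_k)$, so only the eigenvalue expansion \eqref{0107.8} is needed for the numerator. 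Both routes land on the same quadratic form $\bbv_k^T\bbW\bbv_k$, but your telescoping avoids invoking entrywise eigenvector control at that stage. For the denominator the two approaches coincide: you correctly identify that the entrywise expansion is unavoidable for $\sum_i\delta_i^2$, and the paper's argument (equations \eqref{1106.2hh}--\eqref{add08.9}) is exactly the uniform control you describe. One minor omission: the expansion $\hat d_k-d_k$ carries not only the random $\bbv_k^T\bbW\bbv_k$ but also a deterministic bias $\bbv_k^T\mathbb E\bbW^2\bbv_k/d_k=O(\alpha_n^2/|d_k|)=O(1)$ under Condition~\ref{cond3}; this is still $\ll n^{\ep}$, so your conclusion is unaffected.
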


It is seen that with the same critical value $\Phi^{-1}(1-\alpha/2)$, $\widetilde T_n$ enjoys the same asymptotic properties on size and power as $T_n$. In addition, since the construction of $\widetilde T_n$ does not depend on any tuning parameter, the implementation is much easier.

\subsection{Estimation of $K$}\label{sec: K-est}
RIRS naturally suggests a simple method for estimating the rank $K$. The idea is similar to the one in \cite{L16}. That is,
we sequentially test the following hypotheses
 $$H_0: K=K_0 \quad \text{vs.} \quad H_1:K>K_0,$$
for $K_0=1,2,...,K_{\max}$ at a given significance level $\alpha\in (0,1)$ using RIRS.
 Here, $K_{\max}$ is some prespecified positive integer that should be larger than the true rank. In the application, we may select $K_{\max}=\lfloor C\log\log n\rfloor$ with some positive constant $C$. Once RIRS fails to reject a value of $K_0$, we stop and use it as the estimate of the rank. Denote by $\widehat K$ our resulting estimate.

\begin{corollary}\label{coro2}
Suppose there exists some positive constant $\delta_1$ such that
\begin{equation}\label{eq:estim}
\left|\sum_{k=K_0+1}^{K}d_k\sum_{i\neq j}\bbv_k(i)\bbv_k(j)\right|\gg \sqrt{m\log^{\delta_1} n}\left(\sqrt{\sum_{i\neq j}\mathbb \sigma_{ij}^2}+\sqrt{K-K_0}\sum_{k=K_0+1}^{K}|d_k|\right),
\end{equation}
holds for all $K_0<K$. Under the conditions of Theorem \ref{thmb}, we have
\[
\mathbb P(\widehat{K}=K)\rightarrow 1-\alpha.
\]
\end{corollary}

\begin{rmk}
  The additional condition \eqref{eq:estim} ensures that the estimation procedure rejects all $K_0<K$ with asymptotic probability 1 even when $K$ diverges.
\end{rmk}

\begin{rmk}\label{rmk:est} Our numerical studies suggest that the RIRS based sequential testing approach may underestimate $K$ when the network is very sparse or the signal strength is very low; see simulation results in the Supplementary material. To improve the estimation accuracy, we propose to add a penalty on underestimation to $|T_n|$ as follows
\begin{equation}\label{eq:estim2}
|\check{T}_n| = |T_n|+\frac{|\max_i\sum_{j}X_{ij}|^{1/2}}{|\hat d_{K_0}|-|\hat d_{K_0+1}|}.
\end{equation}
Then we implement our estimation procedure identically with $|T_n|$ replaced with $|\check{T}_n|$.  It is easy to show that Corollary \ref{coro2} still holds.
 The intuition is that the penalty term in \eqref{eq:estim2} tends to zero with asymptotic probability one when $K_0=K$, and provides a positive penalty when $K_0<K$. This modified test is useful for increasing the estimation accuracy, especially when $K$ is large, as demonstrated in Table S.3 in the Supplementary file.
\end{rmk}

\subsection{Networks without selfloops: why subsampling?}\label{Sec3}
In this section, we formalize the heuristic arguments given in Section \ref{sec: motivation} on why sub-sampling is necessary. We theoretically show that $T_n$ is no longer asymptotically normal under $H_0$ without the ingredient of subsampling.  For technical simplicity,  we constraint ourselves to the setting of finite $K$ and no eigenvalue multiplicity in this subsection.

We start with introducing some additional notations that will be used in this subsection. For any matrices $\bbM_1$ and $\bbM_2$ of appropriate dimensions, let
\begin{equation}\label{defr}
\mathcal{R}(\bbM_1,\bbM_2,t)=-\sum_{l=0,l\neq 1}^L\frac{\bbM_1^T\mathbb{E}\bbW^l\bbM_2}{t^{l+1}},
\end{equation}
 where $L=\lfloor\log n\rfloor$.

 By Lemma 6 and Theorem 1 of \cite{FF18}, for each $k=1,\cdots, K$,  there exists a unique deterministic $t_k$ such that $\frac{t_k}{d_k}\rightarrow 1$ as $n\rightarrow\infty$ and $\hat d_k-t_k=\bbv_k^T\bbW\bbv_k+O_p(\frac{\alpha_n}{|d_k|})$.
 Define
 \begin{align*}
&\bbb^T_{\bbe_i,k,t}=\bbe_i^T-\mathcal{R}(\bbe_i,\bbV_{-k},t)\Big((\bbD_{-k})^{-1}+\mathcal{R}(\bbV_{-k},\bbV_{-k},t)\Big)^{-1}\bbV_{-k}^T,\\
&\bbs_{k,i}=\bbb_{\bbe_i,k,t_k}-\bbe_i^T\bbv_k\bbv_k, \quad \bbs_k=\sum_{i=1}^n\bbs_{k,i},\ \bbs_k(i)=\bbe_i^T\bbs_k,
\\
&\text{and } \quad \bbr_k=\bbV_{-k}(t_k\bbD_{-k}^{-1}-\bbI)^{-1}\bbV_{-k}^T\mathbb{E}\bbW^2\bbv_k,
 \end{align*}
  where $\bbV_{-k}$ is the submatrix of $\bbV$ by removing the $k$-th column, and we slightly abuse the notation and use $\bbD_{-k}$ to denote the submatrix of $\bbD$ by removing the $k$th diagonal entry.

 Further define $a_k=\sum_{i=1}^n\bbv_k(i)$, $k=1,\cdots, K$ and
 \begin{align}
R(K)&=2\sum_{k=1}^{K}\frac{\mathbf{1}^T\mathbb{E}\bbW^2\bbv_ka_k}{t_k}-2\sum_{k=1}^{K}\frac{a_k^2\bbv_k^T\mathbb{E}\bbW^2\bbv_k}{d_k}\\
&+\sum_{k=1}^{K}\bbv_k^T\diag(\bbW)\bbv_ka_k^2+2\sum_{k=1}^{K}a_k\bbs_k^T\diag(\bbW)\bbv_k+2\sum_{k=1}^{K}a_k\frac{\mathbf{1}^T\bbr_k}{t_k}.\nonumber
\end{align}
We have the following theorems.

\begin{theorem}\label{thm1}
	Suppose that Conditions \ref{cond1}--\ref{cond4} hold with no eigenvalue multiplicity,  and
	{\small\begin{equation}\label{eq: cond-var} \sum_{i<j}\sigma^2_{ij}\left(1-\sum_{k=1}^{K_0}a_k^2\bbv_k(i)\bbv_k(j)-\sum_{k=1}^{K_0}a_k\Big(\bbv_k(j)\bbs_k(i)+\bbv_k(i)\bbs_k(j)\Big)\right)^2\ge {\color{black}(\log n)^{6+\ep_1}\Big(n+\frac{n^2\alpha_n^2}{d_{K_0}^2}\Big)},
	\end{equation}}
 for some positive constant $\ep_1$.
 Under null hypothesis, as $n\rightarrow\infty$, we have

{\small	$$\frac{\sum\limits_{i\neq j}\hat w_{ij}+R(K_0)}{2\sqrt{\sum\limits_{i<j}\sigma^2_{ij}\left(1-\sum\limits_{k=1}^{K_0}a_k^2\bbv_k(i)\bbv_k(j)-\sum\limits_{k=1}^{K_0}a_k\big(\bbv_k(j)\bbs_k(i)+\bbv_k(i)\bbs_k(j)\big)\right)^2}}\stackrel{d}{\rightarrow } N(0,1).$$}
\end{theorem}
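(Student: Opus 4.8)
The plan is to begin from an exact algebraic identity and then substitute the random-matrix expansions of \cite{FF18}. Using $x_{ij}=h_{ij}+w_{ij}$ for $i\neq j$, the unit-norm identities $\sum_{i\neq j}\hat\bbv_k(i)\hat\bbv_k(j)=(\mathbf 1^T\hat\bbv_k)^2-1$ and $\sum_{i\neq j}\bbv_k(i)\bbv_k(j)=a_k^2-1$, and the fact that under $H_0$ the mean matrix is $\bbH=\sum_{k=1}^{K_0}d_k\bbv_k\bbv_k^T$, I obtain
$$\sum_{i\neq j}\hat w_{ij}=\sum_{i\neq j}w_{ij}+\sum_{k=1}^{K_0}d_k(a_k^2-1)-\sum_{k=1}^{K_0}\hat d_k\big((\mathbf 1^T\hat\bbv_k)^2-1\big).$$
The entire analysis then reduces to expanding the scalars $\hat d_k(\mathbf 1^T\hat\bbv_k)^2$ accurately enough to separate the genuinely stochastic CLT-scale fluctuation from the deterministic (and lower-order random) bias.

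For this I would insert the eigenvalue expansion $\hat d_k=t_k+\bbv_k^T\bbW\bbv_k+(\text{second order})+o_p(\cdot)$ recalled in the excerpt, together with the summed entrywise eigenvector expansion of \cite{FF18}, which by construction of $\bbs_k=\sum_i\bbs_{k,i}$ has the form $\mathbf 1^T\hat\bbv_k=a_k+\bbs_k^T\bbW\bbv_k+(\text{second order})+o_p(\cdot)$. Because the network has no self-loops, the effective noise driving the spectrum of $\bbX$ is $\bbW-\diag(\bbW)$; this is exactly what produces the $\bbv_k^T\diag(\bbW)\bbv_k$ and $\bbs_k^T\diag(\bbW)\bbv_k$ pieces of $R(K_0)$, while the second-order parts of the two expansions produce the $\mathbf 1^T\mathbb E\bbW^2\bbv_k/t_k$, $a_k^2\bbv_k^T\mathbb E\bbW^2\bbv_k/d_k$, and (through the $\bbV_{-k}$-subspace correction) $\mathbf 1^T\bbr_k/t_k$ pieces. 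Multiplying out $\hat d_k(\mathbf 1^T\hat\bbv_k)^2$ to second order and summing over $k$, I would sort the outcome into a constant $\sum_k t_k a_k^2$, a part linear in $\bbW$, a quadratic part, and a negligible tail controlled by the truncation order $L$ in the definition of $\mathcal R$.

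The linear-in-$\bbW$ part is the CLT driver. After the diagonal contributions of $\bbv_k^T\bbW\bbv_k$ and $\bbs_k^T\bbW\bbv_k$ are absorbed into the $\diag(\bbW)$ pieces of the centering, the off-diagonal linear term of $\sum_{i\neq j}\hat w_{ij}$ is
$$L_n=\sum_{i\neq j}w_{ij}\Big(1-\sum_{k=1}^{K_0}a_k^2\bbv_k(i)\bbv_k(j)-\sum_{k=1}^{K_0}a_k\big(\bbv_k(j)\bbs_k(i)+\bbv_k(i)\bbs_k(j)\big)\Big),$$
a weighted sum of the independent, bounded, mean-zero variables $\{w_{ij}\}_{i<j}$ (Condition \ref{cond1}) with variance $4\sum_{i<j}\sigma_{ij}^2(\cdots)^2$, so that $\mathrm{sd}(L_n)$ equals the displayed denominator. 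I would then establish $L_n/\mathrm{sd}(L_n)\stackrel{d}{\rightarrow}N(0,1)$ by the Lindeberg--Feller CLT; the Lindeberg condition follows from the boundedness of the $w_{ij}$ together with the variance lower bound \eqref{eq: cond-var}, which forces the aggregate variance to dominate the largest squared weight.

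Finally I would collect the centering and bound the remainders. Replacing the quadratic forms $\bbv_k^T\bbW^2\bbv_k$, $\mathbf 1^T\bbW^2\bbv_k$, and $\mathbf 1^T\bbW\bbV_{-k}(t_k\bbD_{-k}^{-1}-\bbI)^{-1}\bbV_{-k}^T\bbW\bbv_k$ by their expectations via a Hanson--Wright-type concentration bound shows that the quadratic part equals $-R(K_0)+o_p(\mathrm{sd}(L_n))$, whence $\sum_{i\neq j}\hat w_{ij}+R(K_0)=L_n+o_p(\mathrm{sd}(L_n))$ and Slutsky's theorem gives the claim. The remainder estimates rely on Conditions \ref{cond3}--\ref{cond4} (through $|d_K|\gtrsim\alpha_n^2$ and $\|\bbV\|_\infty\lesssim n^{-1/2}$), and the lower bound \eqref{eq: cond-var} of order $n^{\ep_1}(1+n^2\alpha_n^2/d_{K_0}^2)$ is calibrated precisely so that these terms are swallowed. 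I expect the main obstacle to be exactly this last step: carrying the \cite{FF18} eigenvector expansion to second order with the correct self-consistent normalization and the additional $\diag(\bbW)$ corrections, and then proving that the second-order stochastic fluctuations---in particular the cross terms between the $\bbv_k^T\bbW\bbv_k$ and $\bbs_k^T\bbW\bbv_k$ corrections and the tail beyond order $L$---concentrate fast enough to stay $o_p(\mathrm{sd}(L_n))$ and not leak into the Gaussian limit.
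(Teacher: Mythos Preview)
Your proposal is correct and matches the paper's approach: both insert the second-order eigen-expansions from \cite{FF18} (the paper's Lemmas \ref{keylem}--\ref{keylem2}), isolate the same linear-in-$\bbW$ weighted sum $L_n$ (the paper calls it $S_n$), invoke the Lyapunov CLT under \eqref{eq: cond-var}, and collect the deterministic and lower-order random bias into $R(K_0)$ with remainder $O_\prec(n\alpha_n/|d_{K_0}|+1)$. The only cosmetic difference is that you first pass to the scalar identity for $\sum_{i\neq j}\hat w_{ij}$ and then expand $\hat d_k(\mathbf 1^T\hat\bbv_k)^2$, whereas the paper expands each $\hat w_{ij}$ entrywise (their display \eqref{1106.1}) and then sums; note also that the $\diag(\bbW)$ pieces of $R(K_0)$ arise not because the spectrum sees $\bbW-\diag(\bbW)$ but simply from splitting $\bbv_k^T\bbW\bbv_k$ and $\bbs_k^T\bbW\bbv_k$ into their off-diagonal parts (which merge into $L_n$) and diagonal parts.
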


\begin{theorem}\label{thm3}
Suppose that Conditions \ref{cond1}--\ref{cond4} hold with  no eigenvalue multiplicity.  In addition, assume
\eqref{eq: cond-var} holds with $K_0$ and $d_{K_0}$ replaced with $K$ and $d_K$, respectively. Under alternative hypothesis, as $n\rightarrow\infty$, we have
{\small\begin{align*}
\frac{\sum\limits_{i\neq j}\hat w_{ij}+R(K)-\sum\limits_{k=K_0+1}^{K}d_ka_k^2}{2\sqrt{\sum\limits_{i<j}\sigma^2_{ij}\left(1-\sum\limits_{k=1}^{K}a_k^2\bbv_k(i)\bbv_k(j)-\sum\limits_{k=1}^{K}a_k\Big(\bbv_k(j)\bbs_k(i)+\bbv_k(i)\bbs_k(j)\Big)\right)^2}}
\stackrel{d}{\rightarrow } N(0,1).
\end{align*}
}
\end{theorem}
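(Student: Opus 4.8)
The plan is to prove Theorem~\ref{thm3} by reducing it to the null-type analysis already carried out for Theorem~\ref{thm1}, applied with $K$ in place of $K_0$, and then accounting separately for the $K-K_0$ spiked components that are present in $\bbX$ but are not subtracted off when forming $\widehat\bbW$. First I would use that $\hat\bbv_k$ is a unit vector, so $\sum_{i\neq j}\hat\bbv_k(i)\hat\bbv_k(j)=(\mathbf{1}^T\hat\bbv_k)^2-1$, and write the off-diagonal residual sum as
\begin{equation*}
\sum_{i\neq j}\hat w_{ij}=\left(\sum_{i\neq j}x_{ij}-\sum_{k=1}^{K}\hat d_k\big[(\mathbf{1}^T\hat\bbv_k)^2-1\big]\right)+\sum_{k=K_0+1}^{K}\hat d_k\big[(\mathbf{1}^T\hat\bbv_k)^2-1\big].
\end{equation*}
The first bracket is exactly the residual one would form by subtracting \emph{all} $K$ estimated spikes, which is structurally identical to the null scenario of Theorem~\ref{thm1} with true rank $K$. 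Under the alternative all $K$ eigenvalues of $\bbH$ remain detectable (Condition~\ref{cond3} controls $|d_K|$), so the expansions of \cite{FF18} apply to each $\hat d_k,\hat\bbv_k$ for $1\le k\le K$, and the analysis underlying Theorem~\ref{thm1} transfers verbatim to the first bracket with $K_0$ replaced by $K$. This is precisely why the centering $R(K)$ and the variance weights in the denominator both range over $1\le k\le K$ rather than $1\le k\le K_0$: the relevant expansion of each subtracted $\hat\bbv_k$ is driven by $\bbV_{-k}$, which under the alternative contains all the remaining true eigenvectors, including the un-subtracted $\bbv_{K_0+1},\dots,\bbv_K$.

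Then I would invoke the core expansions (Lemmas~\ref{keylem}--\ref{keylem2} together with $\hat d_k-t_k=\bbv_k^T\bbW\bbv_k+O_p(\alpha_n/|d_k|)$) to reduce the first bracket, after adding $R(K)$, to its leading stochastic part
\begin{equation*}
2\sum_{i<j}w_{ij}\Big(1-\sum_{k=1}^{K}a_k^2\bbv_k(i)\bbv_k(j)-\sum_{k=1}^{K}a_k\big(\bbv_k(j)\bbs_k(i)+\bbv_k(i)\bbs_k(j)\big)\Big),
\end{equation*}
with the second-order ($\mathbb{E}\bbW^2$) and diagonal ($\diag(\bbW)$, $\bbr_k$) corrections produced by the no-self-loop structure collected into $R(K)$, and the higher-order terms shown to be negligible. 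The second summation is pure signal: its leading deterministic part is $\sum_{k=K_0+1}^{K}d_k(a_k^2-1)$, whose dominant piece $\sum_{k=K_0+1}^{K}d_ka_k^2$ is exactly the extra centering in the numerator of Theorem~\ref{thm3}, while the remaining $-\sum_{k=K_0+1}^{K}d_k$ and the fluctuations of $\hat d_k$ and $(\mathbf{1}^T\hat\bbv_k)^2$ for $k>K_0$ must be shown to be of smaller order than the standard deviation.

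With this reduction, the asymptotic normality follows from a martingale/Lindeberg central limit theorem for the leading sum $2\sum_{i<j}w_{ij}c_{ij}$, where $c_{ij}$ denotes the weight above. Since the entries $w_{ij}$, $i<j$, are independent and bounded (Condition~\ref{cond1}), verifying the Lindeberg condition reduces to checking $\max_{i<j}\sigma_{ij}^2c_{ij}^2/\sum_{i<j}\sigma_{ij}^2c_{ij}^2\rightarrow 0$, which is where the variance lower bound \eqref{eq: cond-var}, stated with $K$ and $d_K$ in place of $K_0$ and $d_{K_0}$, enters. The same lower bound guarantees that every neglected remainder is dominated by $\sqrt{\sum_{i<j}\sigma_{ij}^2c_{ij}^2}$, so that Slutsky's theorem yields the stated $N(0,1)$ limit.

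I expect the main obstacle to be the uniform control of the remainder: establishing that the accumulated estimation error from replacing $(\hat d_k,\hat\bbv_k)$ by $(d_k,\bbv_k)$ for $k\le K$, the cross terms between the signal and noise expansions, and the leftover $-\sum_{k=K_0+1}^{K}d_k$ plus the fluctuations of the un-subtracted spikes are all $o_p\big(\sqrt{\sum_{i<j}\sigma_{ij}^2c_{ij}^2}\big)$. This is exactly the step that forces the strong-signal and variance assumptions of Conditions~\ref{cond3}--\ref{cond4} and \eqref{eq: cond-var}, and it is the heart of why aggregating all off-diagonal entries without subsampling produces an unwieldy centering $R(K)-\sum_{k=K_0+1}^{K}d_ka_k^2$ and a scaling that both depend on unknown population quantities, thereby motivating the subsampled statistic of Theorem~\ref{thma}.
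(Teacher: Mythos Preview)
The paper's route is much more direct than yours. Instead of subtracting all $K$ empirical spikes and then adding back the last $K-K_0$, the paper simply defines $\tilde w_{ij}=\hat w_{ij}-\sum_{k=K_0+1}^{K}d_k\bbv_k(i)\bbv_k(j)$ and observes, via Lemma~\ref{lem1} (equation~\eqref{0917.2}), that $\tilde w_{ij}$ admits exactly the same expansion as $\hat w_{ij}$ does under the null. The proof of Theorem~\ref{thm1} then applies verbatim to $\sum_{i\neq j}\tilde w_{ij}$, and the deterministic shift $\sum_{i\neq j}\sum_{k>K_0}d_k\bbv_k(i)\bbv_k(j)=\sum_{k>K_0}d_k(a_k^2-1)$ supplies the extra centering. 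This is the exact analogue of how Theorem~\ref{thmb} is reduced to Theorem~\ref{thma}.

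Your decomposition is a genuinely different route, and it contains a real gap. You call the second bracket ``pure signal'' whose fluctuations ``must be shown to be of smaller order than the standard deviation.'' But the fluctuation $\hat d_k[(\mathbf 1^T\hat\bbv_k)^2-1]-d_k(a_k^2-1)$ equals $\sum_{i\neq j}\big[\hat d_k\hat\bbv_k(i)\hat\bbv_k(j)-d_k\bbv_k(i)\bbv_k(j)\big]$, which by Lemma~\ref{lem1} is exactly $\sum_{i\neq j}\big[\Delta(d_k)\hat\bbv_k(i)\hat\bbv_k(j)+d_k\Delta(\bbv_k(i))\bbv_k(j)+d_k\Delta(\bbv_k(j))\bbv_k(i)+d_k\Delta(\bbv_k(i))\Delta(\bbv_k(j))\big]$. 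These are precisely the terms that, in the proof of Theorem~\ref{thm1}, generate both the $k$th summand of $R(\cdot)$ and the $k$th contribution to the $c_{ij}$ weights in the leading weighted sum $2\sum_{i<j}w_{ij}c_{ij}$; they are of the \emph{same} order as the main CLT sum, not smaller. Adding them back therefore cancels the $k>K_0$ pieces you obtained from the first bracket, so the second bracket cannot be treated as signal plus negligible noise. Separately, the leftover $-\sum_{k>K_0}d_k$ you flag is not generically $o_p$ of the standard deviation either (e.g.\ $\theta_n\asymp 1$, $|d_K|\asymp n$, $\sqrt{\sum\sigma_{ij}^2}\asymp n$), so that step would also fail as stated.
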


It is seen from Theorems \ref{thm1} and \ref{thm3} that aggregating all entries of the residual matrix leads to a statistic with  bias and variance taking very complicated forms under both null and alternative hypotheses. The   complicated forms of bias and variance limit the practical usage of the above results. In addition, and more importantly,  these asymptotic normality results may even fail to hold in some cases.
To understand this, note that the  variance of $\sum_{i\neq j}\hat w_{ij}+R(K_0)$ in Theorem \ref{thm1}  is approximately equal to $$4\sum_{i<j}\sigma^2_{ij}\left(1-\sum_{k=1}^{K_0}a_k^2\bbv_k(i)\bbv_k(j)-\sum_{k=1}^{K_0}a_k(\bbv_k(j)\bbs_k(i)+\bbv_k(i)\bbs_k(j))\right)^2.$$
Condition \eqref{eq: cond-var} is imposed to put a lower bound on the variance. Without this condition, the asymptotic normality in Theorem \ref{thm1} will no longer hold.  We next give an example where inequality \eqref{eq: cond-var}  and the asymptotic normality both fail to hold.  This justifies the necessity of the subsampling step.

\begin{example}
Consider networks with eigenvector taking the form $\bbv_1 = \frac{1}{\sqrt n}\mathbf{1}$.  Then $a_1=\sqrt n$. Since $\bbv_k$, $k\ge 2$ are orthogonal to $\bbv_1$, we have $a_k=0$, $k\ge 2$.  By Condition \ref{cond4} and {Theorem S.1} in the Supplementary file, we have
$\max_i|\bbs_1(i)|\lesssim \frac{\alpha_n^2}{\sqrt n d_1^2}$. Combining this with Condition \ref{cond3} and using the fact $\bbv_1 = \frac{1}{\sqrt n}\mathbf{1}$, we have
\begin{align}
&\sum_{i<j}\sigma^2_{ij}\left(1-\sum_{k=1}^{K_0}a_k^2\bbv_k(i)\bbv_k(j)-\sum_{k=1}^{K_0}a_k(\bbv_k(j)\bbs_k(i)+\bbv_k(i)\bbs_k(j))\right)^2\non
&= \sum_{i<j}\sigma^2_{ij}\left(1-n\bbv_1(i)\bbv_1(j)-\sqrt{n}\big(\bbv_1(j)\bbs_1(i)+\bbv_1(i)\bbs_1(j)\big)\right)^2\non
&= \sum_{i<j}\sigma^2_{ij}\left(\bbs_1(i)+\bbs_1(j)\right)^2\non
&\lesssim \frac{\alpha_n^4}{nd_1^4}\sum_{i<j}\sigma^2_{ij}\le \frac{\alpha_n^6}{d_1^4}\lesssim \frac{n^2\alpha_n^2}{d_1^4}\lesssim \Big(1+\frac{n^2\alpha_n^2}{d_{K_0}^2}\Big),\nonumber
\end{align}
where in the last line we have used  $\sum_{i<j}\sigma_{ij}^2\leq n\alpha_n^2$ and $\alpha_n^2\lesssim n$.
This contradicts \eqref{eq: cond-var}.  
Further,  by checking the proof of Theorem \ref{thm1}, we see that the intrinsic  problem is when aggregating too many terms from the residual matrix, the noise accumulation is no longer negligible, canceling the first order term $\sum_{i\neq j}\sigma_{ij}^2$, and consequentially makes the central limit theorem fail.   Similar phenomenon happens under the alternative hypothesis as well.
\end{example}

\section{Simulation studies} \label{Sec4}

In this section, we use simulations to justify the performance of RIRS in testing and estimating $K$. Section \ref{Sec4-1} considers the network model and Section \ref{Sec4-2} considers more general low rank plus noise matrices. The nominal level is fixed to be $\alpha = 0.05$ in all settings.

\subsection{ Network models}\label{Sec4-1}
Consider the DCMM model \eqref{0105.1}.
We simulate two types of nodes: pure node with $\bbpi_i$ chosen from the set of unit vectors
\begin{equation*}
\text{PN}(K)=\{\bbe_1,\cdots,\bbe_K\},
\end{equation*}
and the mixed membership node with $\bbpi_i$ chosen from
\begin{equation*}
\text{MM}(K,x)=\Big\{(x,1-x,\underset{K-2}{\underbrace{0,\cdots,0}} ),\quad (1-x,x,\underset{K-2}{\underbrace{0,\cdots,0}} ),\quad (\underset{K}{\underbrace{\frac{1}{K},\cdots,\frac{1}{K}}})\Big\}
\end{equation*}
where $x\in (0,1)$.

Sections \ref{Sec4-1-1} and \ref{Sec4-1-2} concern the testing performance and Section \ref{Sec4-1-3} focuses on the estimation performance with RIRS.

\subsubsection{SBM}\label{Sec4-1-1}

When all rows of  $\bbPi$ are chosen from the pure node set $\text{PN}(K)$ and the degree heterogeneity matrix $\bbTheta=\rho\bbI_n$, the DCMM \eqref{0105.1} reduces to the SBM with the following mean matrix structure
\begin{equation}\label{0106.1}
\bbH=\rho\bbPi\bbB\bbPi^T,\quad \rho\in (0,1),\quad  \bbpi_i\in \text{PN}(K),\ i=1,\cdots, n.
\end{equation}

We generate 200 independent adjacency matrices each with $n=1000$ nodes  and $K$ equal-sized communities from the above SBM \eqref{0106.1}.
We set $\bbB=(B_{ij})_{K\times K}$ with $B_{ij}=s^{|i-j|}$, $i\neq j$ and $B_{ii}=(K+1-i)/K$.  The value of $\rho$ ranges from 0.04 to 0.9, with smaller $\rho$ corresponding to sparser network model.  For all values of  $K$, we choose $m=\sqrt n$ in calculating the RIRS test statistics $T_n$ and $\widetilde T_n$ for networks without and with selfloops, respectively.

The performance of RIRS is compared with the methods in \cite{L16}, where two versions of test -- one with and one  without bootstrap correction -- were proposed when the network is absent of selfloops. The empirical sizes and powers of both methods when $s=0.1$ are reported in Tables \ref{tab:1} and \ref{tab:2} for $K=2$ and $3$,  respectively. The corresponding computation times are reported in Table \ref{tab:time}.  We also test the performance of both methods with different values of $s$,  and the corresponding results are summarized in Table S.1 in the {\color{black} Supplementary} file.

From Tables \ref{tab:1} and \ref{tab:2}, we observe that  when $K=2$, the performance of RIRS is relatively robust  to the sparsity level $\rho$, with size close to the nominal level and power close to 1 in almost all settings.
 On contrary, the method in \cite{L16}  without bootstrap correction has much worse performance. It suffers from size distortion for smaller $\rho$ (sparser setting). This phenomenon becomes even more severe when $K=3$, where the sizes are close or even equal to one at all sparsity levels. With such distorted size, it is no longer meaningful to compare the power. Therefore we omit its power in Table \ref{tab:2}.   With bootstrap correction,  the method in \cite{L16} performs much better and is comparable to RIRS when $K=2$ except for the setting of $\rho=0.04$, where the size is severely distorted. When $K=3$,  both methods suffer from some size distortion when $\rho\leq 0.1$, where the problem is more severe for the method in \cite{L16}.   Comparing Table \ref{tab:1} with Table \ref{tab:2}, we see that the increased number of communities $K$ makes the performance of both methods worse. This is reasonable because the network size is fixed at $n=1000$.  So larger $K$ results in smaller size of each community.  From Table \ref{tab:time} we see that the computational cost of the bootstrap method in \cite{L16} is much higher than that of RIRS.  We also experimented with larger value of $K=5$ and the results are summarized in Table S.2 of the {\color{black}Supplementary} file to save space.

Finally, we present in Figure \ref{fig:sbm1} the histogram plots as well as the fitted density curves of our test statistics from 1000 repetitions when $K=2$, $s=0.1$, and $\rho=0.7$  under the null hypothesis. The standard normal density curves are also plotted as reference. It visually confirms that the asymptotic null distribution is standard normal.

\begin{table}[htbp]
	\centering
	{\small
		\caption{ Empirical size and power under SBM with $K=2$ and $s=0.1$.
		}
		\begin{tabular}{|c|cc|cc|cc|cc|}
			\hline
			& \multicolumn{6}{c|}{No selfloop} & \multicolumn{2}{c|}{Selfloops}  \\
			\hline
			& \multicolumn{2}{c|}{RIRS ($T_n$)}   &\multicolumn{2}{c|}{Lei (no bootstrap)} &\multicolumn{2}{c|}{Lei (bootstrap)} & \multicolumn{2}{c|}{RIRS ($\widetilde{T}_n$)}\\
			\hline
			$\rho$    & size  & $\underset{(K_0=1)}{\text{power}}$& size  & $\underset{(K_0=1)}{\text{power}}$ & size  & $\underset{(K_0=1)}{\text{power}}$& size  & $\underset{(K_0=1)}{\text{power}}$\\
			\hline
            0.04&0.055& 0.86&1&1&0.575& 1 & 0.140& 0.310\\
            0.07&0.060& 0.99&1&1& 0.055 &1 & 0.060& 0.605\\
            0.09&0.055 &1 &0.995&1&0.040 &1 & 0.085& 0.750\\
			0.1   & 0.025 & 1     & 0.995 & 1     &  0.035     &   1    & 0.085 & 0.815 \\
			0.3   & 0.025 & 1     & 0.24  & 1     &   0.02    &   1    & 0.06  & 1 \\
			0.5   & 0.045 & 1     & 0.07  & 1     &   0.025    &   1    & 0.065 & 1 \\
			0.7   & 0.065 & 1     & 0.1   & 1     &   0.055    &   1    & 0.05  & 1 \\
			0.9   & 0.04  & 1     & 0.045 & 1     &   0.065    & 1      & 0.075 & 1 \\
			\hline
		\end{tabular}%
		\label{tab:1}%
	}

\vspace{15pt}

	\centering
	{\small
		\caption{Empirical size and power under SBM with $K=3$ and $s=0.1$.
		}
\resizebox{\textwidth}{24mm}{
		\begin{tabular}{|c|ccc|c|ccc|ccc|}
			\hline
			& \multicolumn{7}{c|}{No selfloop} & \multicolumn{3}{c|}{Selfloops}  \\
			\hline
			& \multicolumn{3}{c|}{RIRS ($T_n$)}   &$\underset{\text{(no bootstrap)}}{\text{Lei}}$&\multicolumn{3}{c|}{$\underset{\text{( bootstrap)}}{\text{Lei}}$} & \multicolumn{3}{c|}{RIRS($\widetilde{T}_n$)}\\
			\hline
			$\rho$ &size & $\underset{(K_0=1)}{\text{power}}$ &$\underset{(K_0=2)}{\text{power}}$ &size &size & $\underset{(K_0=1)}{\text{power}}$ &$\underset{(K_0=2)}{\text{power}}$&size & $\underset{(K_0=1)}{\text{power}}$ &$\underset{(K_0=2)}{\text{power}}$ \\
			\hline
            0.04&0.225&0.985&0.235& 1&1&1&1&0.35&0.71&0.075\\
            0.07&0.27&1&     0.26& 1 &1&1&1&0.365&0.925&0.09\\
            0.09&0.225&1&    0.335&1 &0.985&1&1&0.21&1&0.12\\
			0.1   & 0.065 & 1     & 0.36  & 1          &   0.895    &    1   &   1    & 0.1   & 0.98  & 0.19 \\
			0.3   & 0.075 & 1     & 0.795 & 1          &    0.06   &    1   &     1  & 0.065 & 1     & 0.625 \\
			0.5   & 0.045 & 1     & 0.98  & 0.99       &   0.02    &     1  &    1   & 0.075 & 1     & 0.94 \\
			0.7   & 0.045 & 1     & 0.985 & 0.925      &      0.04  &   1    &   1    & 0.065 & 1     & 1 \\
			0.9   & 0.05  & 1     & 1     & 0.69       &     0.015  &   1    &    1   & 0.05  & 1     & 1 \\
			\hline
		\end{tabular}}%
		\label{tab:2}%
	}

\vspace{15pt}

  \centering
  \caption{Average computation time (in seconds) for test statistics in Table \ref{tab:1} and Table \ref{tab:5} in one replication under SBM with no selfloop, $K=2$ and $\rho=0.5$.}
    \begin{tabular}{|c|cc|cc|cc|}
    \hline
  & \multicolumn{2}{c|}{RIRS ($T_n$)}   &\multicolumn{2}{c|}{Lei (no bootstrap)} &\multicolumn{2}{c|}{Lei (bootstrap)}\\
  \hline
  & Size  &Estimation& Size  &Estimation& Size  & Estimation\\
  \hline
  Time& 0.504&0.906 & 0.432 & 2.88  &14.410&147.142 \\
  \hline
  \end{tabular}%
  \label{tab:time}%
\end{table}%

   \begin{figure}[!htb]
    \includegraphics[width=5.5in,  trim=0 0.8in 0 0.3in, clip]{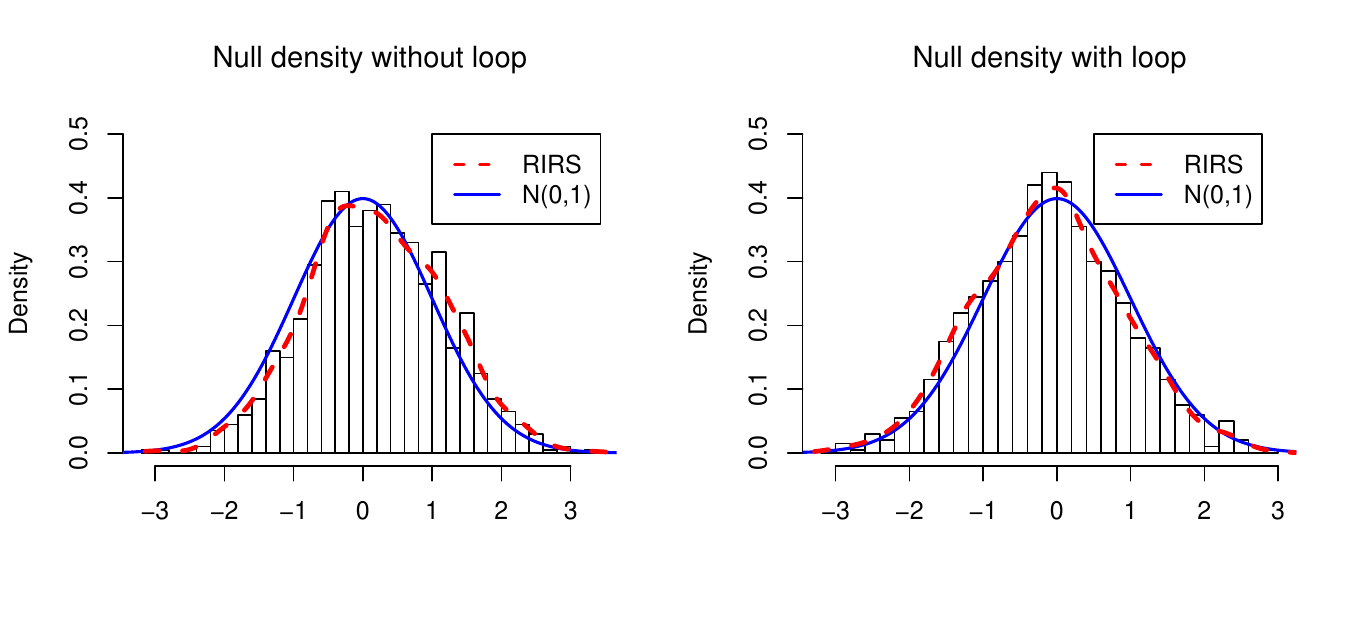}
    \caption{Histogram plots and the estimated densities (red curves) of RIRS test statistic when $K=2$ and $\rho=0.7$. Left: $T_n$ when no selfloop; Right: $\widetilde T_n$ when selfloops exist.}
    \label{fig:sbm1}
  \end{figure}

\subsubsection{DCMM}\label{Sec4-1-2}

Next consider the general DCMM model (\ref{0105.1}). The number of repetition is still 200.
We simulate the node degree parameters $\vartheta_j$'s independently  from the uniform distribution over $[0.5,1]$. The vectors $\bbpi_i$ are chosen from $\text{PN}(K)\cup\text{MM}(K,0.2)$, with $n_0$ pure nodes from each community  and $(n-Kn_0)/3$ nodes from each mixed membership probability mass vector in $\text{MM}(K,0.2)$. We select $n_0=0.35n$ when $K=2$ and $n_0=0.25n$ when $K=3$. The matrix $\bbB$ is chosen to be the same as in the SBM with $s=0.1$. The network size $n$ ranges from 800 to 2000. The empirical sizes and powers are summarized in Table \ref{tab:new1}.

Since \cite{L16} only considers SBM, the tests therein are no longer applicable in this setting. RIRS performs well and similarly to the SBM setting. Figure \ref{fig:dcmm1} presents the histogram plots as well as the fitted density curves of
RIRS under the null hypothesis from 1000 repetitions when $K = 3$ and $n=1500$. These results well justify our theoretical findings.

\begin{table}[!htb]
  \centering
  {\small
  \caption{Empirical size and power of RIRS under DCMM model.}
  \resizebox{\textwidth}{18mm}{
    \begin{tabular}{|c|cc|cc||ccc|ccc|}
        \hline
& \multicolumn{4}{c||}{$K=2$} & \multicolumn{6}{c|}{$K=3$} \\
    \hline
  & \multicolumn{2}{c|}{No Selfloop ($T_n$)} & \multicolumn{2}{c||}{Selfloop ($\widetilde T_n$)} &  \multicolumn{3}{c|}{No Selfloop ($T_n$)} & \multicolumn{3}{c|}{Selfloop ($\widetilde T_n$)}  \\
  \hline
$n$ &Size& $\underset{(K_0=1)}{\text{Power}}$ & Size      &$\underset{(K_0=1)}{\text{Power}}$ &Size & $\underset{(K_0=1)}{\text{Power}}$ &$\underset{(K_0=2)}{\text{Power}}$ &Size & $\underset{(K_0=1)}{\text{Power}}$ &$\underset{(K_0=2)}{\text{Power}}$  \\
\hline
    800   & 0.045 & 1     & 0.08  & 1 &0.05  & 1     & 0.58  & 0.08  & 1     & 0.845 \\
    1000  & 0.04  & 1     & 0.05  & 1 &0.025 & 1     & 0.68  & 0.06  & 1     & 0.92 \\
    1200  & 0.065 & 1     & 0.05  & 1 &0.045 & 1     & 0.77  & 0.07  & 1     & 0.92 \\
    1500  & 0.045 & 1     & 0.03  & 1 &0.075 & 1     & 0.9   & 0.055 & 1     & 0.98 \\
    1800  & 0.075 & 1     & 0.055 & 1 &0.045 & 1     & 0.98  & 0.065 & 1     & 0.995 \\
    2000  & 0.075 & 1     & 0.065 & 1 &0.05  & 1     & 0.965 & 0.045 & 1     & 1 \\
    \hline
    \end{tabular}}%
  \label{tab:new1}%
  }
\end{table}%

     \begin{figure}[!htp]
    \centering
    \includegraphics[width=5.2in,  trim=0 0.8in 0 0.3in, clip]{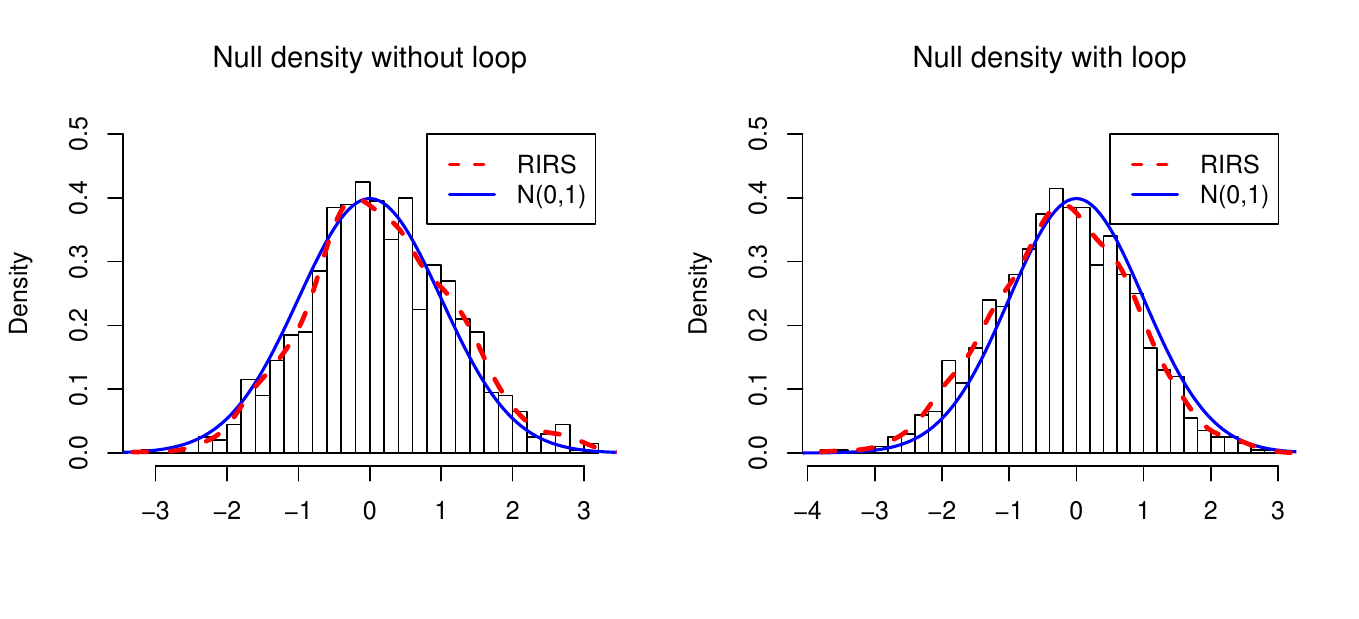}
    \caption{DCMM. Histogram plots and the estimated densities (red curves) of RIRS when $K=3$ and $n=1500$. Left: $T_n$ when no selfloop; Right: $\widetilde T_n$ when selfloops exist.}
    \label{fig:dcmm1}
  \end{figure}

\subsubsection{Estimating the Number of Communities}\label{Sec4-1-3}

We use the method discussed in Section \ref{sec: K-est} to estimate the number of communities $K$.
Since the approaches in \cite{L16} are not applicable to the DCMM model, we only compare the performance of RIRS with \cite{L16}  in SBM setting in the absence of selfloops. The proportions of correctly estimated $K$ are calculated over 200 replications and tabulated in Table \ref{tab:5} for SBM and in Table \ref{tab:6} for DCMM model.

Table \ref{tab:5} shows that RIRS is generally comparable to or is only slightly worse than Lei's method under the SBM when $\rho$ is large,   and is significantly better than the latter for very small $\rho$. While for DCMM model (Table \ref{tab:6}), RIRS can also estimate the number of communities with high accuracy. In particular, the estimation accuracy gets closer and closer to the expected value of 95\% as $n$ increases, which is consistent with our theory.

\begin{table}[htbp]
  \centering
  {\small
  \caption{Proportion of correctly estimated $K$ under SBM. }
   \resizebox{\textwidth}{22mm}{
    \begin{tabular}{|c|ccc|c||ccc|c|}
    \hline
    &\multicolumn{4}{c||}{$K=2$}&\multicolumn{4}{c|}{$K=3$}\\
    \hline
    &\multicolumn{3}{c|}{No Selfloop}&Selfloop&\multicolumn{3}{c|}{No Selfloop}& Selfloop\\
    \hline
    $\rho$&$\underset{T_n}{\text{RIRS}}$&$\underset{\text{(no bootstrap)}}{\text{Lei}}$&$\underset{\text{(bootstrap)}}{\text{Lei}}$&$\underset{\widetilde T_n}{\text{RIRS}}$&
    $\underset{T_n}{\text{RIRS}}$&$\underset{\text{(no bootstrap)}}{\text{Lei}}$&$\underset{\text{(bootstrap)}}{\text{Lei}}$&$\underset{\widetilde T_n}{\text{RIRS}}$\\
    \hline
    0.04&0.85&0&0.32&0.345&0.12&0&0&0.015\\
    0.07&0.945&0&0.98&0.59&0.2&0&0&0.04\\
    0.09&0.945&0&0.96&0.775&0.23&0&0.02&0.125\\
    0.1   & 0.93  & 0     & 0.97  & 0.815 & 0.285 & 0     & 0.165 & 0.105 \\
    0.3   & 0.94  & 0.795 & 0.955 & 0.96  & 0.745 & 0     & 0.935 & 0.645 \\
    0.5   & 0.945 & 0.925 & 0.925 & 0.95  & 0.9   & 0.005 & 0.98  & 0.895 \\
    0.7   & 0.97  & 0.915 & 0.945 & 0.96  & 0.955 & 0.065 & 0.995 & 0.955 \\
    0.9   & 0.94  & 0.94  & 0.935 & 0.955 & 0.93  & 0.275 & 0.975 & 0.945 \\
    \hline
    \end{tabular}}%
  \label{tab:5}%
  }
\end{table}%

\begin{table}[htbp]
  \centering
 {\small
  \caption{Proportion of correctly estimated $K$ under DCMM.}
   \resizebox{\textwidth}{12.4mm}{
    \begin{tabular}{|C{0.8cm}|C{0.66cm}C{0.66cm}C{0.66cm}C{0.66cm}C{0.66cm}C{0.66cm}||C{0.66cm}C{0.66cm}C{0.66cm}C{0.66cm}C{0.66cm}C{0.66cm}|}
    \hline
    &\multicolumn{6}{c||}{$K=2$}&\multicolumn{6}{c|}{$K=3$}\\
    \hline
    $n$     & 800   & 1000  & 1200  & 1500  & 1800  & 2000  &  800   & 1000  & 1200  & 1500  & 1800  & 2000 \\
    \hline
    &\multicolumn{12}{c|}{No Selfloop ($T_n$)}\\
    \hline
    RIRS   & 0.935 & 0.935 & 0.93  & 0.965 & 0.935 & 0.95  & 0.505 & 0.625 & 0.805 & 0.865 & 0.915 & 0.935 \\
    \hline
    &\multicolumn{12}{c|}{Selfloop ($\widetilde T_n$)}\\
    \hline
    RIRS   & 0.935 & 0.94  & 0.955 & 0.955 & 0.955 & 0.945 &  0.79  &  0.85  &  0.93  & 0.895 & 0.935 & 0.965 \\
    \hline
    \end{tabular}}%
  \label{tab:6}%
  }
\end{table}%

\subsection{Low rank data matrix}\label{Sec4-2}
RIRS can be applied to other low rank data matrices beyond the network model.
In this section, we generate $n\times n$ data matrix $\bbX$ from the following model
$$\bbX=\bbH+\bbW= \bbV\bbD\bbV^T+\bbW,$$
where the residual matrix $\bbW$ is symmetric with upper triangle entries (including the diagonal ones) i.i.d from uniform distribution over (-1,1). Let $\bbV=\frac{1}{\sqrt{2}}\left(
  \begin{array}{ccc}
    \bbV_1\\
    \bbV_2 \\
  \end{array}
\right) $, where $\bbV_1$ and $\bbV_2$ are  $n_1\times K$ and  $(n-n_1)\times K$ matrices respectively.
We randomly generate an $n_1\times n_1$ Wigner matrix and collect its $K$ eigenvectors corresponding to the largest $K$ eigenvalues to form $\bbV_1$. We set $\bbV_2=\frac{\sqrt K}{\sqrt{ n-n_1}}\bbPi$ with $\bbPi=(\bbpi_1,...,\bbpi_{n-n_1})^T$, where $\bbpi_i\in \text{PN}(K)$ and  the number of rows taking each distinct value from $\text{PN}(K)$ is the same. The diagonal matrix $\bbD=n\times \diag(K,K-1,...,1)$. We set $n_1=n/2$ and range the value of $n$ from 100 to 500. When $K=2$, the empirical sizes and powers as well as the proportions of correctly estimated $K$ over 500 repetitions are recorded in Table \ref{tab:lowrank}. It is seen that both $T_n$ and $\widetilde{T}_n$ performs well, with $\widetilde{T}_n$ having slightly higher power. This higher power further translates into better estimation accuracy (closer to 95\%) of estimated $K$.

\begin{table}[htbp]
  \centering
  {\footnotesize\caption{Empirical size and power,  and the proportion (Prop) of correctly estimated  $K$ over 500 replications. }
    \begin{tabular}{|l|ccccc||ccccc|}
    \hline
    &\multicolumn{5}{c||}{No Selfloop ($T_n$)}&\multicolumn{5}{c|}{Selfloop ($\widetilde T_n$)}\\
    \hline
    $n$&100&200&300&400&500&100&200&300&400&500\\
    \hline
  Size& 0.048 & 0.042 & 0.05  & 0.054 & 0.052 & 0.05  & 0.05  & 0.032 & 0.062 & 0.052 \\
 Power& 0.612 & 0.914 & 0.994 & 1     & 1     & 1     & 1     & 1     & 1     & 1 \\
  Prop&0.588 & 0.856 & 0.944 & 0.95  & 0.954 & 0.95  & 0.95  & 0.968 & 0.938 & 0.948 \\
 \hline
    \end{tabular}%
  \label{tab:lowrank}%
  }
\end{table}%

\section{Real data analysis} \label{Sec:real}
We consider a popularly studied network of political blogs assembled by \cite{Adamic05}.   The nodes are blogs over the period of two months before the 2004 U.S. Presidential Election. The edges are the web links between the blogs. These blogs have known political divisions and were labeled into two communities ($K=2$) by \cite{Adamic05} -- the liberal and conservative communities. This blog data has been frequently used in the literature, see \cite{Karrer2011}, \cite{zhao2012} and \cite{L16} among others. It is widely believed to follow a degree corrected block model.  
Following the literature, we ignore the directions and study only the largest connected component, which has $n=1222$ nodes.
Consider the following two hypothesis tests:
\begin{eqnarray*}
&&{\textbf {(HT1)}:}\quad H_0: K=1\ \text{ vs. } \ H_1: K>1.\\
&&{\textbf {(HT2)}:}\quad H_0: K=2\ \text{ vs. } \ H_1: K>2.
\end{eqnarray*}

\cite{L16} considered {\textbf{(HT2)}} and obtained test statistic values 1172.3 and 491.5, corresponding to the test without bootstrap and with bootstrap, respectively. Both are much larger than the critical value (about 1.454) from the Tracy-Widom distribution, and thus the null hypothesis in {\textbf{(HT2)}} was strongly rejected. This is not surprising because the testing procedure in \cite{L16} is based on the SBM. It is possible that the model is misspecified when applying the tests therein.

RIRS does not depend on any specific network model structure and is expected to be more robust to model misspecification. Since most of the diagonal entries of $\bbX$ are zero, we use the test statistic $T_n$. Noticing that the observed data matrix $\bbX$ is non-symmetric, we consider two simple transformations:
\begin{equation}\label{0107.1}
\text{Method 1}: \widetilde{\bbX}_1=\bbX+\bbX^T;\qquad
\text{Method 2}:\widetilde{\bbX}_2=\left(
\begin{array}{ccc}
\bbzero &\ \ \bbX\\
\bbX^T& \ \ \bbzero\\
\end{array}
\right)_{2n\times 2n}.
\end{equation}
The transformation in Method 2 is general and can be applied to even non-square data matrix $\bbX$.
After the transformations, $\rank(\E(\widetilde{\bbX}_1))=K$ and $\rank(\E(\widetilde{\bbX}_2))=2K$.
The results of applying $T_n$ to the two hypothesis test problems \textbf {(HT1)} and \textbf {(HT2)}, together with the estimated number of communities by the sequential testing procedure are reported in Table \ref{tab:realdata}. We can see that for both transformations, RIRS consistently estimated the number of communities to be 2, which is consistent with the common belief in the literature.

\begin{table}[h]
  \centering
  {\footnotesize\caption{Hypothesis testing and estimation results for the political blog data.}
    \begin{tabular}{|c|c|c||c|c|c|}
    \hline
    &\multicolumn{2}{c||}{Method 1}&\multicolumn{2}{c|}{Method 2}&\multirow{ 2}{*}{Decision}\\
    \cline{2-5}
    &Test Statistic&P-value&Test Statistic&P-value&\\
    \hline
   {\textbf{(HT1)}}&3.3527&0.0008&2.7131&0.0067&Reject $H_0$ in \textbf{(HT1)}\\
    \hline
    {\textbf{(HT2)}}& -1.2424&0.2141&-0.8936&0.3716& Accept $H_0$ in \textbf{(HT2)}\\
    \hline
    Estimate&\multicolumn{2}{c||}{2}&\multicolumn{2}{c|}{2}& $K=2$\\
    \hline
    \end{tabular}%
  \label{tab:realdata}%
  }
\end{table}%

\begin{appendix}
\section{Proof of the main results}\label{SecA}

We introduce a definition that will be used frequently in the proof.
\begin{definition}\label{def1}

Let $\zeta_n(i)$ and $\xi_n(i)$ be random (or deterministic) variables depend on $i$, $i=1,\ldots,n$. We say $\xi_n=O_{p_u}(\zeta_n)$ if  for any positive constants $D$ and $\ep$, there exists  some positive integer $n_0(D,\ep)$ depending only on  $D$ and $\ep$ such that for all $n\ge n_0(D,\ep)$ we have
	$$\mathbb{P}\left[\exists{1\le i \le n}, s.t. |\xi_n(i)|>( \log n)^{\ep}|\zeta_n(i)|\right]\le (\log \log n)^{-D}.$$
\end{definition}

In addition, to facilitate our proof presentation, we introduce some additional notations. Let $\tilde{\mathcal{K}}$ be the number of distinct nonzero eigenvalues of $\bbH=(h_{ij})$.  Denote by $\tilde{\mathrm{d}}_1, \cdots, \tilde{\mathrm{d}}_{\tilde{\mathcal K}}$ the distinct values in $\{d_1,\cdots, d_K\}$, sorting in decreasing magnitudes. In addition,  denote by $K_j$ the multiplicity of $\tilde{\mathrm{d}}_j$. That is, for each $j=1,\cdots, \tilde{\mathcal K}$, the cardinality of $\{1\leq l\leq K: d_l = \tilde{\mathrm{d}}_j \}$  is $K_j$ with $\sum_{j=1}^{\tilde{\mathcal K}}K_j = K$.

\subsection{Outline of The Proof} \label{Sec6}
Our Condition \ref{cond2} and main results Theorems \ref{thma}--\ref{thm2} allow for multiplicity in eigenvalues.  When proving these main results, we consider the case with and without multiplicity separately.
The proof of our main results highly depends on  the asymptotic expansions of the eigenvectors $
\hat \bv_k$ and eigenvalues $\hat d_k$.  Briefly speaking, Lemma 1 in Section 7.1 of the Supplementary material establishes the relationship between $\hat w_{ij}$ and $(\hat v_k(i), \hat d_k)$. This together with the asymptotic expansions of eigenvalues and eigenvectors gives us the asymptotic expansion of $\hat w_{ij}$. Substituting this asymptotic expansion into the proposed test statistics, we are able to prove our main theorems by
additional analysis and calculations.  In the case without eigenvalue multiplicity, the asymptotic expansions of eigenvalues and eigenvectors are established in Lemmas 2 and 3 in the Supplementary material. The corresponding results in the existence of multiplicity are established in Lemmas 4 and 5 in the Supplementary file. In the main paper we only provide the proof of Theorem \ref{thma}. All other proofs are relegated to the Supplement.

\subsection{Proof of Theorem \ref{thma}}
The result in Theorem \ref{thma} can be obtained by combing the following two results.
	\begin{align}
&\text{CLT}:	\qquad \frac{\sqrt m\sum_{i\neq j}\hat w_{ij}Y_{ij}}{\sqrt{2\sum_{i\neq j}\mathbb{E} w^2_{ij}}}\stackrel{d}{\rightarrow } N(0,1),\label{0307.1}\\
&\text{Consistency:}\qquad \frac{\sum_{i\neq j}\hat w_{ij}^2}{\sum_{i\neq j}\mathbb{E} w_{ij}^2}=1+o_{p}(1).\label{0307.4}
\end{align}
We next proceed with proving \eqref{0307.1} and \eqref{0307.4}.

We first verify the central limit theorem \eqref{0307.1}.
By S.44 and Corollary S.1 we have
\begin{align}\label{0113.7}
&\sum_{i\neq j}\hat w_{ij}Y_{ij}=\sum_{i\neq j}w_{ij}Y_{ij}-\sum_{k=1}^{K_0}\sum_{i\neq j}(\bbv_k(i)\bbv_k(j)Y_{ij})\bbv_k^T\bbW\bbv_k\\
&-\sum_{k=1}^{K_0}\sum_{i\neq j}Y_{ij}\frac{\bbe_i^T\bbW^2\bbv_k\bbv_k(j)+\bbe_j^T\bbW^2\bbv_k\bbv_k(i)}{t_k}
\non
&-\sum_{k=1}^{K_0}\sum_{i\neq j}Y_{ij}\frac{\bbr_k(i)\bbv_k(j)+\bbr_k(j)\bbv_k(i)}{t_k}+2\sum_{k=1}^{K_0}\frac{\bbv_k^T\mathbb{E}\bbW^2\bbv_k}{d_k}\sum_{i\neq j}(\bbv_k(i)\bbv_k(j)Y_{ij})\non
&-2\sum_{k=1}^{K_0}\frac{d_k}{t_k}\sum_{i\neq j}Y_{ij}\bbv_k(i)\bbs_{k,j}^T\bbW\bbv_k-\sum_{k=1}^{K_0}\sum_{i\neq j}\frac{d_k\bbe_i^T\bbW\bbv_k\bbe_j^T\bbW\bbv_kY_{ij}}{t_k^2}\non
&+\sum_{i\neq j}(Y_{ij}O_{p_u}(\frac{\alpha_n(\log n)^3}{n |d_{K_0}|}+\frac{1}{n^{3/2}})). \nonumber
\end{align}
Recall that $\mathbb{E}Y_{ij}=\frac{1}{m}$, $a_k=\sum_{i=1}^n\bbv_k(i)$ and $|a_k|\leq \sqrt{n}$.
Our aim is to bound all terms on the right hand side of equation \eqref{0113.7} except for the first term $\sum_{i\neq j}w_{ij}Y_{ij}$. We begin with
splitting the term
$$\sum_{i\neq j}(\bbv_k(i)\bbv_k(j)Y_{ij})\bbv_k^T\bbW\bbv_k$$
into two parts:
$$\frac{1}{m}\sum_{i\neq j}(\bbv_k(i)\bbv_k(j))\bbv_k^T\bbW\bbv_k \quad \text{and}\quad \sum_{i\neq j}(Y_{ij}-\frac{1}{m})(\bbv_k(i)\bbv_k(j))\bbv_k^T\bbW\bbv_k.$$
 For the first part, first note that since $|w_{ij}|$ is bounded, we have $|\bbv_k^T\mathbb E\bbW\bbv_k|\lesssim 1$. Then by  Corollary S.2 in the Supplementary material and Condition \ref{cond1}  we have
\begin{align}\label{0113.5}
&\frac{1}{m}\sum_{i\neq j}(\bbv_k(i)\bbv_k(j))\bbv_k^T\bbW\bbv_k = \frac{1}{m}(a_k^2-1)\bbv_k^T\bbW\bbv_k\\
&=\frac{1}{m}(a_k^2-1)\Big(\bbv_k^T(\bbW-\mathbb E \bbW)\bbv_k +\bbv_k^T\mathbb E \bbW\bbv_k \Big)=(a_k^2+1)(O_{p_u}(\frac{\alpha_n}{m\sqrt n})+O(\frac{1}{m})).\nonumber
\end{align}
For the second part, first note that $\mathbb{E}(\bbv_k^T\bbW\bbv_k)^2$ $=$ $\var(\bbv_k^T\bbW\bbv_k)$ $+$ $\mathbb{E}^2(\bbv_k^T\bbW\bbv_k)$ $\lesssim$ $\alpha_n^2/n + 1$. Since $Y_{ij}$, $i\le j$ are i.i.d with $\mathbb{E}Y_{ij}=\frac{1}{m}$, Corollary S.1 and  Condition \ref{cond1}  ensure that
\begin{align*}
&\var(\sum_{i\neq j}(Y_{ij}-\frac{1}{m})(\bbv_k(i)\bbv_k(j))\bbv_k^T\bbW\bbv_k)\\
 &= \mathbb E\Big[\var\Big(\sum_{i\neq j}(Y_{ij}-\frac{1}{m})(\bbv_k(i)\bbv_k(j))\bbv_k^T\bbW\bbv_k|\bbW\Big)\Big]\non
&\lesssim \frac{1}{m}\sum_{i\neq j}(\bbv_k(i)\bbv_k(j))^2\mathbb{E}(\bbv_k^T\bbW\bbv_k)^2\lesssim \frac{\alpha_n^2}{mn}+\frac{1}{m},
\end{align*}
then
\begin{equation}\label{add08.22}
\sum_{i\neq j}(Y_{ij}-\frac{1}{m})(\bbv_k(i)\bbv_k(j))\bbv_k^T\bbW\bbv_k=O_{p}(\frac{\alpha_n}{\sqrt{mn}}+\frac{1}{\sqrt m}).
\end{equation}
Therefore,
{\small\begin{equation}\label{add08.30}
\sum_{i\neq j}(\bbv_k(i)\bbv_k(j)Y_{ij})\bbv_k^T\bbW\bbv_k
=(a_k^2+1)O_{p_u}(\frac{\alpha_n}{m\sqrt n})+O(\frac{a_k^2+1}{m})+O_{p}(\frac{\alpha_n}{\sqrt{mn}})+O_{p}(\frac{1}{\sqrt{m}}).
\end{equation}}

Similar to \eqref{add08.30},  we get
$$\frac{\bbv_k^T\mathbb{E}\bbW^2\bbv_k}{d_k}\sum_{i\neq j}(\bbv_k(i)\bbv_k(j)Y_{ij})=O_{p_u}(\frac{a_k^2}{m}+\frac{1}{\sqrt m}),$$
and
\begin{equation}\label{add08.31}
\sum_{i\neq j}Y_{ij}\frac{\bbe_i^T\bbW^2\bbv_k\bbv_k(j)+\bbe_j^T\bbW^2\bbv_k\bbv_k(i)}{t_k}=O_{p_u}(\frac{|a_k|\sqrt n}{m}+\frac{1}{\sqrt m}).
\end{equation}

Next we split the term
$\sum_{i\neq j}Y_{ij}\frac{d_k\bbs_{k,j}^T\bbW\bbv_k\bbv_k(i)}{t_k}$
into the following two parts
$$\frac{1}{m}\sum_{i\neq j}\frac{d_k\bbs_{k,j}^T\bbW\bbv_k\bbv_k(i)}{t_k} \quad \text{and}\quad \sum_{i\neq j}(Y_{ij}-\frac{1}{m})\frac{d_k\bbs_{k,j}^T\bbW\bbv_k\bbv_k(i)}{t_k}.$$
By (S.85), we have
\begin{align*}
&\|\mathcal{R}(\mathbf{1},\bbV_{-k},t_k)\Big((\bbD_{-k})^{-1}+\mathcal{R}(\bbV_{-k},\bbV_{-k},t_k)\Big)^{-1}\bbV_{-k}^T\|\\
&=\|\sum_i\mathcal{R}(\bbe_i,\bbV_{-k},t_k)\Big((\bbD_{-k})^{-1}+\mathcal{R}(\bbV_{-k},\bbV_{-k},t_k)\Big)^{-1}\bbV_{-k}^T\|=O_{p_u}(\sqrt n).
\end{align*}
In light of (S.46), (S.85), Corollary S.1  in the Supplementary material and Condition \ref{cond4}, the following three results hold:
\begin{align*}
&\frac{d_k(\bbs_{k}-\mathbf{1})^T \bbW\bbv_ka_k}{t_k}\non
&=-\frac{d_ka_k\mathcal{R}(\mathbf{1},\bbV_{-k},t_k)\Big((\bbD_{-k})^{-1}+\mathcal{R}(\bbV_{-k},\bbV_{-k},t_k)\Big)^{-1}\bbV_{-k}^T\bbW\bbv_k}{t_k}\\
&\quad-\frac{d_ka_k^2\bbv_k^T\bbW\bbv_k}{t_k}=O_{p_u}(\alpha_n+|a_k|\sqrt n)+O_{p_u}(|a_k|\alpha_n),
\end{align*}
\begin{align*}
\frac{1}{m}\sum_{i\neq j}\frac{d_k\bbs_{k,j}^T\bbW\bbv_k\bbv_k(i)}{t_k}&=
\frac{1}{m}a_k\frac{d_k\mathbf{1}^T(\bbW-\mathbb{E}\bbW)\bbv_k}{t_k}+O_{p_u}(\frac{(|a_k|+1)\alpha_n}{m})\non
&=O_{p_u}(\frac{(|a_k|+1)\alpha_n}{m})
\end{align*}
\begin{equation*}
\text{and}\qquad\sum_{i\neq j}(Y_{ij}-\frac{1}{m})\frac{d_k\bbs_{k,j}^T\bbW\bbv_k\bbv_k(i)}{t_k}=O_{p}(\frac{\alpha_n}{\sqrt m}),
\end{equation*}
where the calculation of the variance of the second part $\sum\limits_{i\neq j}(Y_{ij}-\frac{1}{m})\frac{d_k\bbs_{k,j}^T\bbW\bbv_k\bbv_k(i)}{t_k}$ is similar to that of \eqref{add08.22}.
Therefore,
\begin{equation}\label{add08.32}
\sum_{i\neq j}Y_{ij}\frac{d_k\bbs_{k,j}^T\bbW\bbv_k\bbv_k(i)}{t_k}=O_{p_u}(\frac{(|a_k|+1)\alpha_n+|a_k|\sqrt n}{m})
+O_{p}(\frac{\alpha_n}{\sqrt m}).
\end{equation}

For the term $\sum_{i\neq j}Y_{ij}\frac{\bbr_k(i)\bbv_k(j)+\bbr_k(j)\bbv_k(i)}{t_k}$, we write
\begin{align}\label{0114.2}
\sum_{i\neq j}Y_{ij}\frac{\bbr_k(i)\bbv_k(j)+\bbr_k(j)\bbv_k(i)}{t_k}&=\frac{1}{m}\sum_{i\neq j}\frac{\bbr_k(i)\bbv_k(j)+\bbr_k(j)\bbv_k(i)}{t_k}\non
&\quad+\sum_{i\neq j}(Y_{ij}-\frac{1}{m})\frac{\bbr_k(i)\bbv_k(j)+\bbr_k(j)\bbv_k(i)}{t_k}.
\end{align}
It follows from  Conditions \ref{cond2}--\ref{cond4} and \ref{cond6}, $\frac{t_k}{d_k}\rightarrow 1$ in Section \ref{Sec3}
 and Corollary S.1 that
\begin{equation*}
|\mathbf{1}^T\bbr_k|=|\mathbf{1}^T\bbV_{-k}(t_k\bbD_{-k}^{-1}-\bbI)^{-1}\bbV_{-k}^T\mathbb{E}\bbW^2\bbv_k|=O_{p_u}( \sqrt n\alpha_n^2),
\end{equation*}
\begin{equation*}
|\bbr_k(i)|=|\mathbf{e}^T_i\bbV_{-k}(t_k\bbD_{-k}^{-1}-\bbI)^{-1}\bbV_{-k}^T\mathbb{E}\bbW^2\bbv_k|=O_{p_u}( \frac{\alpha_n^2}{\sqrt n}),
\end{equation*}
and thus

$$\frac{1}{m}\sum_{i\neq j}\frac{\bbr_k(i)\bbv_k(j)+\bbr_k(j)\bbv_k(i)}{t_k}=\frac{2a_k\mathbf{1}^T\bbr_k}{t_km}-\frac{2}{m}\sum_{i=1}^n\frac{\bbr_k(i)\bbv_k(i)}{t_k}
=O_{p_u}(\frac{|a_k|\sqrt n+1}{m}).$$
As for \eqref{add08.22}, calculating the variance of the second term on the right hand side of (\ref{0114.2}) yields
$$\sum_{i\neq j}(Y_{ij}-\frac{1}{m})\frac{\bbr_k(i)\bbv_k(j)+\bbr_k(j)\bbv_k(i)}{t_k}=O_{p_u}(\frac{1}{\sqrt m}).$$
Therefore,
\begin{equation}\label{add08.33}
\sum_{i\neq j}Y_{ij}\frac{\bbr_k(i)\bbv_k(j)+\bbr_k(j)\bbv_k(i)}{t_k}
=O_{p_u}(\frac{|a_k|\sqrt n}{m})+O_{p_u}(\frac{1}{\sqrt m}).
\end{equation}

Now for the term $\sum_{i\neq j}\frac{d_k\bbe_i^T\bbW\bbv_k\bbe_j^T\bbW\bbv_kY_{ij}}{t_k^2}$, similarly we write
{\small
$$\sum_{i\neq j}\frac{d_k\bbe_i^T\bbW\bbv_k\bbe_j^T\bbW\bbv_kY_{ij}}{t_k^2}=\sum_{i\neq j}\frac{d_k\bbe_i^T\bbW\bbv_k\bbe_j^T\bbW\bbv_k}{mt_k^2}+\sum_{i\neq j}\frac{d_k\bbe_i^T\bbW\bbv_k\bbe_j^T\bbW\bbv_k(Y_{ij}-\frac{1}{m})}{t_k^2}.$$
}
It follows from {Corollary S.1 and} Theorem S.1 that the first part has order
\begin{eqnarray*}
\sum_{i\neq j}\frac{d_k\bbe_i^T\bbW\bbv_k\bbe_j^T\bbW\bbv_k}{mt_k^2}
&=&\frac{d_k\bbone^T\bbW\bbv_k\bbone^T\bbW\bbv_k}{mt_k^2}-
\sum_{i=1}^n\frac{d_k(\bbe_i^T\bbW\bbv_k)^2}{mt_k^2}\non
&=&O_{p_u}(\frac{\alpha_n^2(\log n)^2}{m|d_k|})=O_{p_u}(\frac{(\log n)^2}{m}).
\end{eqnarray*}
Moreover, calculating the variance, we have
{\small\begin{eqnarray*}
&&\var\Big(\sum_{i\neq j}\frac{d_k\bbe_i^T\bbW\bbv_k\bbe_j^T\bbW\bbv_k(Y_{ij}-\frac{1}{m})}{t_k^2}\Big)\non
&=&\mathbb E\Big(\left[\var\Big(\sum_{i\neq j}\frac{d_k\bbe_i^T\bbW\bbv_k\bbe_j^T\bbW\bbv_k(Y_{ij}-\frac{1}{m})}{t_k^2}\Big)\Big|\bbW\right]\Big)\non
&\lesssim& \frac{\sum_{i\neq j}\mathbb{E}(\bbe_i^T\bbW\bbv_k\bbe_j^T\bbW\bbv_k)^2}{md^2_k}\leq \frac{\sum_{i\neq j}\sqrt{\mathbb{E}(\bbe_i^T\bbW\bbv_k)^4\mathbb{E}(\bbe_j^T\bbW\bbv_k)^4}}{md^2_k}\nonumber
\end{eqnarray*}
}
{\small\begin{eqnarray*}
&\lesssim& \frac{\sum\limits_{i\neq j}\sqrt{[\mathbb{E}(\bbe_i^T\bbW\bbv_k-\mathbb{E}\bbe_i^T\bbW\bbv_k)^4+(\mathbb{E}\bbe_i^T\bbW\bbv_k)^4][\mathbb{E}(\bbe_j^T\bbW\bbv_k-\mathbb{E}\bbe_j^T\bbW\bbv_k)^4+(\mathbb{E}\bbe_j^T\bbW\bbv_k)^4]}}{md^2_k}\non
&\lesssim& \frac{\alpha_n^4}{md_k^2}=O_{p_u}(\frac{1}{m}).
\end{eqnarray*}
}
Therefore
\begin{equation}\label{add08.34}
\sum_{i\neq j}\frac{d_k\bbe_i^T\bbW\bbv_k\bbe_j^T\bbW\bbv_kY_{ij}}{t_k^2}=O_{p_u}(\frac{(\log n)^2}{m})+O_{p_u}(\frac{1}{\sqrt{m}}).
\end{equation}

Finally, consider the residual term
\begin{eqnarray*}
&&\sum_{i\neq j}(Y_{ij}O_{p_u}(\frac{\alpha_n(\log n)^3}{n |d_{K_0}|}+\frac{1}{n^{3/2}}))\non
&=&\sum_{i\neq j}O_{p_u}(\frac{\alpha_n(\log n)^3}{n |d_{K_0}|}+\frac{1}{n^{3/2}})(Y_{ij}-\frac{1}{m})+\frac{1}{m}\sum_{i\neq j}O_{p_u}(\frac{\alpha_n(\log n)^3}{n |d_{K_0}|}+\frac{1}{n^{3/2}}).
\end{eqnarray*}
Note that $Y_{ij}$ is independent of $O_{p_u}(\frac{\alpha_n(\log n)^3}{n |d_{K_0}|}+\frac{1}{n^{3/2}})$. Calculating the variance of $\sum\limits_{i\neq j}O_{p_u}(\frac{\alpha_n(\log n)^3}{n |d_{K_0}|}+\frac{1}{n^{3/2}})(Y_{ij}-\frac{1}{m})$ gives us
\begin{eqnarray*}
\sum_{i\neq j}O_{p_u}(\frac{\alpha_n(\log n)^3}{n |d_{K_0}|}+\frac{1}{n^{3/2}})(Y_{ij}-\frac{1}{m})=O_p(\frac{1}{\sqrt m})\times O_{p_u}(\frac{\alpha_n(\log n)^3}{|d_{K_0}|}+\frac{1}{n^{1/2}}).
\end{eqnarray*}
The ``mean'' of the residual term should be
$$\frac{1}{m}\sum_{i\neq j}O_{p_u}(\frac{\alpha_n(\log n)^3}{n |d_{K_0}|}+\frac{1}{n^{3/2}})=O_{p_u}(\frac{n\alpha_n(\log n)^3}{m |d_{K_0}|}+\frac{\sqrt n}{m}).$$
Therefore we have
\begin{eqnarray}\label{add08.35}
&&\sum_{i\neq j}(Y_{ij}O_{p_u}(\frac{\alpha_n(\log n)^3}{n |d_{K_0}|}+\frac{1}{n^{3/2}}))\\
&=&O_{p_u}(\frac{n\alpha_n(\log n)^3}{m |d_{K_0}|}+\frac{\sqrt n}{m})
+O_p(\frac{1}{\sqrt m})\times O_{p_u}(\frac{\alpha_n(\log n)^3}{|d_{K_0}|}+\frac{1}{\sqrt n}).\nonumber
\end{eqnarray}

So far we have found the orders of all other terms on the right hand side of equation \eqref{0113.7} except for $\sum_{i\neq j}w_{ij}Y_{ij}$. Note that
\begin{equation}\label{0113.6}
\var(\sum_{i\neq j}w_{ij}Y_{ij})=\frac{2}{m}\sum_{i\neq j}\mathbb{E}w^2_{ij}.
\end{equation}
According to the orders (\ref{add08.30}), \eqref{add08.31}, \eqref{add08.32}, \eqref{add08.33}, \eqref{add08.34} and \eqref{add08.35}, we can conclude that
as long as
$$\frac{2}{m}\sum_{i\neq j}\mathbb{E}w^2_{ij}\ge (\log n)^{\ep_1}(\frac{\sum_{k=1}^{K_0}a_k^2n}{m^2}+\frac{(\alpha_n\log n)^2}{m}+\frac{n}{m^2}+\frac{n^2\alpha_n^2(\log n)^6}{m^2d_{K_0}^2}),$$
 the term $\sum_{i\neq j}w_{ij}Y_{ij}$ dominates all other terms on the right hand side of (\ref{0113.7}). Moreover, by the condition $\sum_{i\neq j}\mathbb{E}w^2_{ij}\gg m$, the fact $\mathbb{E}Y_{ij}^4\lesssim 1/m$ and the independence between $Y_{ij}$ and $w_{ij}$ we have
$$\frac{m^2}{(\sum_{i\neq j}\mathbb{E}w^2_{ij})^2}\sum_{i\neq j}\mathbb{E}w^4_{ij}Y_{ij}^4\lesssim \frac{m\sum_{i\neq j}\mathbb{E}w^4_{ij}}{(\sum_{i\neq j}\mathbb{E}w^2_{ij})^2} \lesssim \frac{m}{(\sum_{i\neq j}\mathbb{E}w^2_{ij})}\rightarrow 0.$$
Therefore, the central limit theorem (\ref{0307.1}) holds by Lyapunov CLT.

We now show the consistency of $\sum_{i\neq j}\hat w_{ij}^2$ in \eqref{0307.4}.
By (S.31), we have
\begin{align*}
\hat w_{ij}^2&=w_{ij}^2+2w_{ij}O_{p_u}(\frac{(\alpha_n\log n)^2}{n|d_{K_0}|}+\frac{1}{n})-2\sum_{k=1}^{K_0}w_{ij}\frac{d_k(\bbe_i^T\bbW\bbv_k\bbv_k(j)+\bbe_j^T\bbW\bbv_k\bbv_k(i))}{t_k}\\
&\quad+O_{p_u}(\frac{(\alpha_n\log n)^4}{n^2|d_{K_0}|^2}+\frac{1}{n^2})+(\sum_{k=1}^{K_0}\frac{d_k(\bbe_i^T\bbW\bbv_k\bbv_k(j)+\bbe_j^T\bbW\bbv_k\bbv_k(i))}{t_k})^2\non
&\quad-2O_{p_u}(\frac{(\alpha_n\log n)^2}{n|d_{K_0}|}+\frac{1}{n})\sum_{k=1}^{K_0}\frac{d_k(\bbe_i^T\bbW\bbv_k\bbv_k(j)+\bbe_j^T\bbW\bbv_k\bbv_k(i))}{t_k}\nonumber
\end{align*}
and
\begin{align}\label{0113.3}
&\sum_{i\neq j}\hat w_{ij}^2=\sum_{i\neq j}w_{ij}^2+2\sum_{i\neq j}(w_{ij}O_{p_u}(\frac{(\alpha_n\log n)^2}{n|d_{K_0}|}+\frac{1}{n}))\\
&\quad-2\sum_{k=1}^{K_0}\sum_{i\neq j}(w_{ij}\frac{d_k(\bbe_i^T\bbW\bbv_k\bbv_k(j)+\bbe_j^T\bbW\bbv_k\bbv_k(i))}{t_k})\non
&\quad+O_{p_u}(\frac{\alpha_n^2(\log n)^4}{|d_{K_0}|}+1)+\sum_{i\neq j}(\sum_{k=1}^{K_0}\frac{d_k(\bbe_i^T\bbW\bbv_k\bbv_k(j)+\bbe_j^T\bbW\bbv_k\bbv_k(i))}{t_k})^2\non
&\quad-2\sum_{i\neq j}O_{p_u}(\frac{(\alpha_n\log n)^2}{n|d_{K_0}|}+\frac{1}{n})\sum_{k=1}^{K_0}\frac{d_k(\bbe_i^T\bbW\bbv_k\bbv_k(j)+\bbe_j^T\bbW\bbv_k\bbv_k(i))}{t_k}.\nonumber
\end{align}
Combing the fact
$\var(\sum_{i\neq j}w_{ij}^2)\le \sum_{i\neq j}\mathbb{E}w_{ij}^4\lesssim \sum_{i\neq j}\mathbb{E}w_{ij}^2$
with Conditions \ref{cond3} and \ref{cond5}
\begin{equation}\label{0113.2}
\sum_{i\neq j}\mathbb{E}w_{ij}^2\gg (\log n)^{2+\ep_1},
\end{equation}
we have
\begin{equation}\label{add08.36}
\frac{\sum_{i\neq j}^nw_{ij}^2}{\sum_{i\neq j}^n\mathbb{E}w_{ij}^2}=1+O_{p_u}(\frac{1}{(\log n)^{\ep_1/2}}).
\end{equation}
Then to prove \eqref{0307.4}, it suffices to show
\begin{equation}\label{add08.37}
\frac{\sum_{i\neq j}\hat w_{ij}^2}{\sum_{i\neq j}w_{ij}^2}=1+O_{p_u}(\frac{1}{(\log n)^{\ep_1/2}}).
\end{equation}
We now check the other terms on the right hand side of \eqref{0113.3} to verify \eqref{add08.37}.
First of all,
\begin{align*}
|\sum_{i\neq j}w_{ij}O_{p_u}(\frac{(\alpha_n\log n)^2}{n|d_{K_0}|}+\frac{1}{n})|&\le \bigg|O_{p_u}(\frac{(\alpha_n\log n)^2}{|d_{K_0}|}+1)\times \sqrt{\sum_{i\neq j}w_{ij}^2}\bigg|\non
&=O_{p_u}(\alpha_n(\log n)\sqrt{\sum_{i\neq j}\mathbb{E}w_{ij}^2}).
\end{align*}
Condition \ref{cond5} further implies that
\begin{equation}\label{add08.40}
|\sum_{i\neq j}w_{ij}O_{p_u}(\frac{(\alpha_n\log n)^2}{n|d_{K_0}|}+\frac{1}{n})|=(\sum_{i\neq j}\mathbb{E}w_{ij}^2)\times O_{p_u}(\frac{1}{(\log n)^{\ep_1/2}}).
\end{equation}
Now consider the term $\sum_{i\neq j}w_{ij}\frac{d_k(\bbe_i^T\bbW\bbv_k\bbv_k(j)+\bbe_j^T\bbW\bbv_k\bbv_k(i))}{t_k}$. We will only provide detail for proving  $\sum_{i\neq j}w_{ij}\frac{d_k\bbe_i^T\bbW\bbv_k\bbv_k(j)}{t_k}$ because the other part can be proved similarly. Write
\begin{eqnarray*}
\sum_{i\neq j}w_{ij}\frac{d_k\bbe_i^T\bbW\bbv_k\bbv_k(j)}{t_k}=\sum_{i\neq j}\frac{d_kw_{ij}^2\bbv^2_k(j)}{t_k}+\sum_{i\neq j, l\neq j}\frac{d_kw_{ij}w_{il}\bbv_k(j)\bbv_k(l)}{t_k}.
\end{eqnarray*}
Direct calculations yield
$$\E\bigg|\sum_{i\neq j}\frac{d_k w_{ij}^2\bbv^2_k(j)}{t_k}\bigg|\lesssim\frac{1}{n}\sum_{i\neq j}\mathbb{E}w_{ij}^2,\quad \sum_{i\neq j, l\neq j}\mathbb{E}\frac{d_kw_{ij}w_{il}\bbv_k(j)\bbv_k(l)}{t_k}=0,\quad \text{and}$$
$$ \var(\sum_{i\neq j, l\neq j}\frac{d_kw_{ij}w_{il}\bbv_k(j)\bbv_k(l)}{t_k})\lesssim\sum_{i\neq j, l\neq j}\mathbb E w^2_{ij}w^2_{il}\bbv^2_k(j)\bbv^2_k(l)\lesssim\frac{\alpha_n^2}{n^2}\sum_{i\neq j}\mathbb{E}w_{ij}^2.$$
Thus, $\sum_{i\neq j}w_{ij}\frac{d_k\bbe_i^T\bbW\bbv_k\bbv_k(j)}{t_k}=O_{p_u}(\frac{1}{\sqrt n})\times \sum_{i\neq j}\mathbb{E}w_{ij}^2$. And consequently,
\begin{equation}\label{add08.41}
\sum_{i\neq j}w_{ij}\frac{d_k(\bbe_i^T\bbW\bbv_k\bbv_k(j)+\bbe_j^T\bbW\bbv_k\bbv_k(i))}{t_k}=O_{p_u}(\frac{1}{\sqrt n})\times \sum_{i\neq j}\mathbb{E}w_{ij}^2.
\end{equation}
Next by Theorem S.1 and Condition \ref{cond5}, we have
\begin{align}\label{add08.42}
&\sum_{i\neq j}O_{p_u}(\frac{(\alpha_n\log n)^2}{n|d_{K_0}|}+\frac{1}{n})\sum_{k=1}^{K_0}\frac{d_k(\bbe_i^T\bbW\bbv_k\bbv_k(j)+\bbe_j^T\bbW\bbv_k\bbv_k(i))}{t_k}\non
&=O_{p_u}(\frac{(\alpha_n\log n)^3}{|d_{K_0}|}+\alpha_n\log n)=O_{p_u}(\alpha_n^2\log^2n)=O_{p_u}(\frac{1}{(\log n)^{\ep_1/2}}\sum_{i\neq j}\mathbb{E}w_{ij}^2).
\end{align}
Finally, similar to (\ref{add08.42}), it holds by Condition \ref{cond5} that
\begin{align}\label{1106.4}
&\sum_{i\neq j}\Big(\sum_{k=1}^{K_0}\frac{d_k(\bbe_i^T\bbW\bbv_k\bbv_k(j)+\bbe_j^T\bbW\bbv_k\bbv_k(i))}{t_k}\Big)^2\\
&\lesssim \sum_{k=1}^{K_0}\sum_{i\neq j}\frac{d_k^2(\bbe_i^T\bbW\bbv_k\bbv_k(j)+\bbe_j^T\bbW\bbv_k\bbv_k(i))^2}{t_k^2}\non
&=O_{p_u}(\alpha_n^2\log^2n)=O_{p_u}(\frac{1}{(\log n)^{\ep_1/2}}\sum_{i\neq j}\mathbb{E}w_{ij}^2).\nonumber
\end{align}
Substituting the arguments \eqref{add08.36}, \eqref{add08.40}, \eqref{add08.41}, \eqref{add08.42} and \eqref{1106.4} into equation \eqref{0113.3}, we complete the proof of \eqref{add08.37}. Thus, \eqref{0307.4} is proved and the results in the theorem follow automatically.

\end{appendix}

\begin{funding}
Qing Yang was supported by National Natural Science Foundation of China (No. 12101585). Xiao Han was partly supported by NSF of China (No.12001518). Yingying Fan was supported by NSF grant DMS-2052964.
\end{funding}

\bibliographystyle{imsart-number}
\bibliography{references}

\end{document}